\numberwithin{equation}{subsection}
\theoremstyle{plain}
\newtheorem{thm}[subsection]{Theorem}
\newtheorem{prop}[subsection]{Proposition}
\newtheorem{lemma}[subsection]{Lemma}
\newtheorem{cor}[subsection]{Corollary}
\theoremstyle{definition}
\newtheorem{defn}[subsection]{Definition}
\newtheorem{notn}[subsection]{Notation}
\newtheorem{cont}[subsection]{Contents}
\newtheorem{ackn}[subsection]{Acknowledgement}
\theoremstyle{remark}
\begin{document}
\title{Hodge theory and the Mordell-Weil rank of elliptic curves over extensions of function fields}
\author{Ambrus P\'al}
\date{November 8, 2013.}
\address{Department of Mathematics, 180 Queen's Gate, Imperial College, London, SW7 2AZ, United Kingdom}
\email{a.pal@imperial.ac.uk}
\begin{abstract} We use Hodge theory to prove a new upper bound on the ranks of Mordell-Weil groups for elliptic curves over function fields after regular geometrically Galois extensions of the base field, improving on previous results of Silverman and Ellenberg, when the base field has characteristic zero and the supports of the conductor of the elliptic curve and of the ramification divisor of the extension are disjoint.
\end{abstract}
\footnotetext[1]{\it 2000 Mathematics Subject Classification. \rm 14J27, 11G40.} 
\maketitle
\pagestyle{myheadings}
\markright{Bounds on the Mordell-Weil rank of elliptic surfaces}

\section{Introduction}

For every field $F$ let $\overline F$ denote its separable closure. For every field extension $K|F$ and variety $Z$ defined over $F$ let $Z_K$ denote the base change of $Z$ to $K$. Let $\mathcal C$ be a smooth projective geometrically irreducible curve defined over a field $k$ and let $F$, $g$ denote its function field and its genus, respectively. Let $\pi:\mathcal C'\rightarrow\mathcal C$ be a finite regular geometrically Galois cover defined over $k$. Let $F'$ be the function field of $\mathcal C'$ and let $S$ is the reduced ramification divisor of the cover $\pi$. Let $G=\textrm{Aut}(\mathcal C'_{\overline k}|\mathcal C_{\overline k})$ and let $\Sigma$ be the image of $\textrm{Gal}(\overline k|k)$ in Aut$(G)$ with respect to 
the natural action of $\textrm{Gal}(\overline k|k)$ on $G$. 

Next we recall the definition of Ellenberg's constant. Let $V$ be the real vector space spanned by the irreducible complex-valued characters of $G\rtimes\Sigma$, and let $W$ be the real vector space spanned by the irreducible complex-valued characters of $G$. We say that a vector $v$ in $V$ (resp. in $W$) is non-negative if its inner product with each irreducible representation of $G\rtimes\Sigma$ (resp. of $G$) is non-negative. Let $c\in V$ be the coset character of $G\rtimes\Sigma$ attached to $\Sigma$, and let $r\in W$ be the regular character of $G$. Ellenberg defines the constant $\epsilon(G,\Sigma)$ as the maximum of the inner product $\langle v,c\rangle$ over all $v\in V$ such that
\begin{enumerate}
\item[$(i)$] $v$ is non-negative;
\item[$(ii)$] $r-R(v)$ is non-negative, where $R:V\rightarrow W$ is the restriction map.
\end{enumerate}
The region of $V$ defined by these two conditions above is a compact polytope, so $\epsilon(G,\Sigma)$ is well-defined. 

Let $E$ be a non-isotrivial elliptic curve over $F$. By the Lang--N\'éron theorem (see \cite{Co}) the group $E(F')$ is finitely generated. Let $c_E$ denote the degree of the conductor of $E$ and let $d_E$ denote the degree of the minimal discriminant of $E$. Our main result is the following
\begin{thm} Assume that $k$ has characteristic zero, and the supports of $S$ and of the conductor of $E$ are disjoint. Then
\begin{equation}
\textrm{\rm rank}(E(F'))\leq\epsilon(G,\Sigma)(c_E-d_E/6+2g-2+\deg(S)).
\end{equation}
\end{thm}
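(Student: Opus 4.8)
The plan is to reduce the theorem to a \emph{multiplicity bound} for the $G$-module $E(F'\overline k)\otimes\overline{\mathbb Q}$ and then feed this into the definition of $\epsilon(G,\Sigma)$. Since the cover is regular, $\mathrm{Gal}(\overline k|k)$ and $G$ together make up the Galois group of $F'\overline k|F$, so $E(F'\overline k)\otimes\mathbb Q$ is a rational representation of $G\rtimes\Sigma$, and one checks $\mathrm{rank}(E(F'))=\dim\big(E(F'\overline k)\otimes\mathbb Q\big)^{\mathrm{Gal}(\overline k|k)}=\langle[E(F'\overline k)\otimes\mathbb Q],c\rangle$, because $c$ is the character of $\mathrm{Ind}_{\Sigma}^{G\rtimes\Sigma}\mathbf 1$ and $(G\rtimes\Sigma)/\Sigma\cong G$. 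Consequently, if one proves that every irreducible $\overline{\mathbb Q}$-representation $\rho$ of $G$ occurs in $E(F'\overline k)\otimes\overline{\mathbb Q}$ with multiplicity at most $(\dim\rho)\,N$, where $N=c_E-d_E/6+2g-2+\deg(S)$, then the vector $v=\tfrac1N[E(F'\overline k)\otimes\mathbb Q]$ is non-negative and satisfies $(ii)$ — indeed $r-R(v)\ge 0$ says precisely that $\dim\mathrm{Hom}_{G}(\rho,E(F'\overline k)\otimes\overline{\mathbb Q})\le(\dim\rho)N$ for every $\rho$ — so $\mathrm{rank}(E(F'))=N\langle v,c\rangle\le N\,\epsilon(G,\Sigma)$. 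Thus everything comes down to the inequality $\dim_{\overline{\mathbb Q}}\mathrm{Hom}_{\overline{\mathbb Q}[G]}\big(\rho,\;E(F'\overline k)\otimes\overline{\mathbb Q}\big)\le(\dim\rho)\big(c_E-d_E/6+2g-2+\deg(S)\big)$.

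To prove this I would pass to elliptic surfaces. Let $f\colon\mathcal E\to\mathcal C$ be the minimal elliptic surface with generic fibre $E$ and put $\mathcal E'=\mathcal E\times_{\mathcal C}\mathcal C'$, $f'\colon\mathcal E'\to\mathcal C'$. Here the hypothesis that $\mathrm{Supp}(S)$ and the conductor of $E$ are disjoint is essential: over the bad points of $E$ the cover $\pi$ is \'etale, while over $S$ the curve $E$ has good reduction (which is stable under base change in characteristic zero), so $\mathcal E'$ is automatically smooth, relatively minimal, and is the minimal model of $E_{F'}$; its singular fibres are exactly the pullbacks of those of $\mathcal E$, and $G$ acts on $\mathcal E'$ over $\mathcal C'|\mathcal C$. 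By Lang--N\'eron, $E(F'\overline k)\otimes\overline{\mathbb Q}=\mathrm{MW}(\mathcal E'_{\overline k})\otimes\overline{\mathbb Q}$; comparing with $\mathbb C$ and using the Lefschetz $(1,1)$ theorem, this embeds $G$-equivariantly into $\mathrm{NS}(\mathcal E'_{\overline k})\otimes\overline{\mathbb Q}\subseteq H^2(\mathcal E'_{\overline k},\overline{\mathbb Q})^{1,1}$, and after removing the $G$-equivariant Shioda--Tate trivial sublattice (zero section, general fibre, non-identity fibre components, of total rank $2+(d_E-c_E)$ times the obvious multiplicities) one gets $\dim\mathrm{Hom}_G(\rho,\mathrm{MW}(\mathcal E'_{\overline k})\otimes\overline{\mathbb Q})\le h^{1,1}$ of the $\rho$-isotypic part of the ``variable'' summand $H^1(\mathcal C'_{\overline k},R^1f'_*\overline{\mathbb Q})$ of $H^2(\mathcal E'_{\overline k})$.

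Next I would identify $\mathrm{Hom}_G(\rho,H^1(\mathcal C'_{\overline k},R^1f'_*\overline{\mathbb Q}))$ with $H^1(\mathcal C_{\overline k},j_*(\mathcal F\otimes\mathcal L_\rho))$, where $\mathcal F=R^1f_*\overline{\mathbb Q}$ (a polarisable variation of Hodge structure of weight one on the locus where $f$ is smooth), $\mathcal L_\rho$ is the local system cut out by $\rho$ on $\mathcal C$ minus the branch locus of $\pi$, and $j$ is the inclusion of (good locus of $E$) $\cap$ (\'etale locus of $\pi$); this uses the projection formula for the finite map $\pi$, the disjointness hypothesis again, and relative minimality, which makes $R^1f'_*\overline{\mathbb Q}$ the middle extension of its smooth part. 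Since $E$ is non-isotrivial, $\mathcal F$ — hence $\mathcal F\otimes\mathcal L_\rho$ and its dual — has no global monodromy invariants, so $H^0$ and $H^2$ of $j_*(\mathcal F\otimes\mathcal L_\rho)$ vanish and $\dim H^1(\mathcal C_{\overline k},j_*(\mathcal F\otimes\mathcal L_\rho))=-\chi(\mathcal C_{\overline k},j_*(\mathcal F\otimes\mathcal L_\rho))$. By Grothendieck--Ogg--Shafarevich — everything being tamely ramified in characteristic zero, and using that the conductor exponent of $E$ at a bad point equals the monodromy drop $2-\dim\mathcal F^{I_x}$ — this is $(\dim\rho)(c_E+4g-4)+2\sum_x\mathrm{drop}_x(\mathcal L_\rho)$, the sum over branch points of $\pi$. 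For the Hodge numbers, the $(2,0)$-part of this pure weight-two Hodge structure is $H^0(\mathcal C_{\overline k},\omega_{\mathcal C}\otimes\overline{F^1(\mathcal F\otimes\mathcal L_\rho)})$, involving the canonical extension of the Hodge bundle $F^1(\mathcal F\otimes\mathcal L_\rho)=(f_*\omega_{\mathcal E/\mathcal C})\otimes\mathcal L_\rho$; its degree is governed by $\deg(f_*\omega_{\mathcal E/\mathcal C})=\chi(\mathcal O_{\mathcal E})=d_E/12$ and by the parabolic weights of $\mathcal L_\rho$ at the branch points, which one reads off from the Chevalley--Weil description of $\pi_*\omega_{\mathcal C'}$. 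Assembling these, $h^{1,1}$ of the $\rho$-isotypic variable part comes out as $(\dim\rho)(c_E-d_E/6+2g-2)$ plus an error supported on the branch locus of $\pi$; the final step is to show this error is at most $(\dim\rho)\deg(S)$. (For $G$ trivial this recovers the classical bound $\mathrm{rank}(E(F))\le c_E-d_E/6+2g-2$ coming from Shioda--Tate and $\rho(\mathcal E)\le h^{1,1}(\mathcal E)$, which is a useful sanity check.)

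I expect the last Hodge-theoretic step to be the main obstacle. One must determine the correct canonical extension of the Hodge filtration at the points where the local monodromy is only quasi-unipotent — the additive fibres of $E$ as well as the branch points of $\pi$ — and then bound the resulting branch-point contribution to $h^{1,1}$ by $(\dim\rho)\deg(S)$. It does not suffice to estimate the parabolic weights one point at a time (a totally ramified point of large index would be too expensive); one needs the global constraint on these weights coming from the fact that the $\mathcal L_\rho$ extend over the whole $\pi$-cover, and this is exactly where $\deg(S)$ enters with the right constant. Some routine points also need care: the reduction of the first paragraph should be made $\Sigma$-equivariant (the multiplicity bounds for $\rho$ and its $\Sigma$-conjugates agree, so $v$ genuinely lies in $V$ and the pairing with $c$ is meaningful), and the vanishing of $H^0$ and $H^2$ above uses non-isotriviality of $E$ in the form that $\mathcal F$ has no sub- or quotient-local-system with finite monodromy.
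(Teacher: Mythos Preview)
Your overall strategy --- reduce to the multiplicity bound $\dim\mathrm{Hom}_G(\rho,E(F'\overline k)\otimes\mathbb C)\le(\dim\rho)N$ for each irreducible $\rho$, embed the Mordell--Weil group into the $(1,1)$-part of the transcendental $H^2$ via Shioda--Tate and the easy direction of Lefschetz $(1,1)$, and control $h^{1,1}$ by subtracting $h^{2,0}+h^{0,2}$ from the Betti number of $H^1(\mathcal C',R^1g'_*\mathbb C)$ computed by an equivariant Grothendieck--Ogg--Shafarevich --- matches the paper exactly. The divergence is entirely in how the $(2,0)$-piece is computed, i.e.\ precisely at the step you flag as the main obstacle.

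You push the computation down to $\mathcal C$ and try to evaluate $h^{2,0}$ of $H^1(\mathcal C,j_*(\mathcal F\otimes\mathcal L_\rho))$ via Deligne's canonical extension and the parabolic weights of $\mathcal L_\rho$ at the branch points. The paper sidesteps this entirely by staying upstairs on $\mathcal C'$. The disjoint-support hypothesis makes $\mathcal E'=\mathcal E\times_{\mathcal C}\mathcal C'$ already smooth and relatively minimal, so $\omega_{\mathcal E'/\mathcal C'}=\pi^*\omega_{\mathcal E/\mathcal C}$ and hence $R^1g'_*\mathcal O_{\mathcal E'}$ is the pullback of a line bundle of degree $-d_E/12$ on $\mathcal C$. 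An elementary equivariant Riemann--Roch for pullback line bundles (the equivariant Euler characteristic changes by $\deg(\mathcal L)$ copies of the regular representation) then gives
\[
[H^2(\mathcal E',\mathcal O_{\mathcal E'})]=\tfrac{d_E}{12}[\mathbb C[G]]-\chi_G(\mathcal C',\mathcal O_{\mathcal C'}),
\]
and Serre duality on both $\mathcal E'$ and $\mathcal C'$ packages $[H^{2,0}]+[H^{0,2}]$ as $\tfrac{d_E}{6}[\mathbb C[G]]-\chi_G(\mathcal C',\mathbb C)$, via the identity $\chi_G(\mathcal C',\mathcal O_{\mathcal C'})+\chi_G(\mathcal C',\mathcal O_{\mathcal C'})^\vee=\chi_G(\mathcal C',\mathbb C)$. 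Subtracting from your equivariant GOS computation of $[H^1(\mathcal C',R^1g'_*\mathbb C)]$ yields the \emph{exact} equality
\[
[H^1(\mathcal E',\Omega^1_{\mathcal E'})/T_{\mathrm{dR}}(\mathcal E')]=(c_E-d_E/6)[\mathbb C[G]]-\chi_G(\mathcal C',\mathbb C).
\]
All the branch-point contributions you were trying to estimate --- in both $h^{2,0}$ and the Betti number --- have cancelled and landed in the single term $-\chi_G(\mathcal C',\mathbb C)$. The endgame is then the trivial inequality $-\Delta_\rho(\chi_G(\mathcal C',\mathbb C))=-\chi(\mathcal C,\mathcal G_\rho)\le(\dim\rho)(2g-2+\deg(S))$, since the conductor of $\mathcal G_\rho$ at a branch point is at most its rank. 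No parabolic Hodge theory or Chevalley--Weil is needed; the ``global constraint'' you anticipated is exactly the Serre-duality identity above.
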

Let $O(G,\Sigma)$ be the cardinality of the set of orbits of $G$ with respect to the action of $\Sigma$. It is easy to prove that $O(G,\Sigma)=\epsilon(G,\Sigma)$ when $G$ is an abelian group (see Proposition 2.11 of \cite{El}). Hence we have the following immediate
\begin{cor} Assume that $k$ has characteristic zero, the supports of $S$ and of the conductor of $E$ are disjoint, and $G$ is abelian. Then
\begin{equation}
\textrm{\rm rank}(E(F'))\leq O(G,\Sigma)\big(c_E-d_E/6+2g-2+\deg(S)).
\end{equation}
\end{cor}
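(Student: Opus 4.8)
The plan is short, since the Corollary is a formal consequence of the main Theorem. First I would observe that the hypotheses of the Corollary are exactly those of the Theorem together with the extra assumption that $G$ is abelian, so the Theorem applies verbatim and yields
\[
\mathrm{rank}(E(F'))\le\epsilon(G,\Sigma)\bigl(c_E-d_E/6+2g-2+\deg(S)\bigr).
\]
By Proposition 2.11 of \cite{El} one has $\epsilon(G,\Sigma)=O(G,\Sigma)$ whenever $G$ is abelian; as this is an equality, substituting it into the displayed estimate gives precisely the bound claimed in the Corollary, with no issue about the direction of the inequality. (The factor $c_E-d_E/6+2g-2+\deg(S)$ is in fact non-negative, being at least $c_E-d_E/6+2g-2\ge0$ by Szpiro's inequality for function fields in characteristic zero, but this is irrelevant here.) So the only thing that needs explaining is why $\epsilon(G,\Sigma)=O(G,\Sigma)$ for abelian $G$, and I would simply recall Ellenberg's argument.

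The inequality $\epsilon(G,\Sigma)\ge O(G,\Sigma)$ holds for every finite $G$: the coset character $c$ is itself a feasible point for Ellenberg's polytope, since $c$ is an honest permutation character of $G\rtimes\Sigma$ (hence non-negative), while $R(c)$ equals the regular character $r$ of $G$ by Mackey's restriction formula (so $r-R(c)=0$ is non-negative), and $\langle c,c\rangle$ counts the $(\Sigma,\Sigma)$-double cosets in $G\rtimes\Sigma$, equivalently the $\Sigma$-orbits on $G$, which is $O(G,\Sigma)$. For the reverse inequality the hypothesis that $G$ is abelian enters: by Clifford theory for the abelian normal subgroup $G$, the irreducible characters of $G\rtimes\Sigma$ are parametrised by pairs consisting of a $\Sigma$-orbit in the dual group $\widehat{G}$ together with an irreducible character of the associated stabiliser, and Brauer's permutation lemma identifies the number of $\Sigma$-orbits on $\widehat{G}$ with $O(G,\Sigma)$. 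Condition $(ii)$ then forces the multiplicity in $R(v)$ of each character of $G$ lying in a fixed orbit to be at most $1$, and writing $\langle v,c\rangle=\langle\mathrm{Res}_\Sigma v,\mathbf{1}\rangle_\Sigma$ by Frobenius reciprocity, a short linear computation over the polytope cut out by $(i)$ and $(ii)$ shows that its maximum equals $O(G,\Sigma)$, attained at $v=c$. This is exactly what is carried out in the proof of Proposition 2.11 of \cite{El}.

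The main obstacle is, frankly, nonexistent: granting the Theorem and Ellenberg's Proposition 2.11, the Corollary is immediate. The only point requiring genuine care in a self-contained treatment would be the reverse inequality $\epsilon(G,\Sigma)\le O(G,\Sigma)$ for abelian $G$ --- setting up correctly the polytope defined by conditions $(i)$ and $(ii)$ and verifying that its optimal vertex corresponds to selecting a single irreducible summand over each $\Sigma$-orbit of $\widehat{G}$ --- but this is standard character theory and is not a contribution of the present paper.
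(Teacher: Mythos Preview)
Your proposal is correct and follows exactly the paper's approach: the paper states the corollary as an ``immediate'' consequence of Theorem~1.1 together with the identity $\epsilon(G,\Sigma)=O(G,\Sigma)$ for abelian $G$ from Proposition~2.11 of \cite{El}, and you do precisely this (with some additional, but accurate, recollection of Ellenberg's argument for that identity).
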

The upper bound:
\begin{equation}
\textrm{\rm rank}(E(F'))\leq O(G,\Sigma)(c_E+4g-4),
\end{equation}
was first proved by Silverman in \cite{Si} under the assumption that $G$ is abelian, the cover $\pi$ is unramified (i.e$\textrm{.}$ $S$ is empty), $k$ is a number field, and a weak form of Tate's conjecture holds for $\mathcal E'$ where $g':\mathcal E'\rightarrow\mathcal C'$ is the unique relatively minimal elliptic surface over $k$ whose generic fibre is $E_{F'}$.  Later in \cite{El} Ellenberg proved the following more general unconditional bound:
\begin{equation}
\textrm{\rm rank}(E(F'))\leq\epsilon(G,\Sigma)(c_E+4g-4+2\deg(S)),
\end{equation}
without any restriction on the group, assuming that the characteristic of $k$ is at least $5$, and without assuming that supports of $S$ and of the conductor of $E$ are disjoint. Note that (1.1.1) trivially implies (1.2.3) since $d_E\geq12$ for every non-isotrivial elliptic curve $E$. In fact $c_E\leq d_E$, so we also have the following immediate consequence of the main result: 
$$
\textrm{\rm rank}(E(F'))\leq\epsilon(G,\Sigma)(5c_E/6+2g-2+\deg(S)),$$
and hence our result is stronger than Ellenberg's bound. Moreover the bound (1.1.1) is false for fields of positive characteristic, even in the degenerate case when $G$ is trivial, see for example \cite{Ul}.

The strategy for proving Theorem 1.1 follows Ellenberg's original idea; it is enough to show that the $\mathbb C[G]$-module $E(K)\otimes\mathbb C$ is the quotient of the free $\mathbb C[G]$-module of rank $c_E-d_E/6+2g-2+\deg(S)$, where $K$ is the composition of the fields $F'$ and $\overline k$. In the course of our proof of this fact we may assume without the loss of generality that $k$ is algebraically closed. In fact by the Lefschetz priciple we may also assume that $k$ is the field of complex numbers, which we will do from now on. However our method is different because we use de Rham cohomology instead of \'etale cohomology and the easy direction of the Lefschetz-$(1,1)$ theorem. (A similar idea is used in \cite{Fa} by Fastenberg.) We prove the required bounds on the multiplicity of irreducible representations of $G$ appearing in certain cohomology groups of $\mathcal E'$ via a simple equivariant Riemann--Roch theorem (see Theorem 2.5) and an equivariant Grothendieck--Ogg--Shafarevich formula for the Euler characteristic of constructible sheaves of complex vector spaces (Theorem 3.6) and its applications to elliptic surfaces (Theorems 3.7 and 4.4) which we think are interesting results on their own.
\begin{ackn} The author was partially supported by the EPSRC grants P19164 and P36794.
\end{ackn}
\begin{cont} In the next chapter we compute the $\mathbb C[G]$-module structure of the cohomology group $H^2(\mathcal E',\mathcal O_{\mathcal E'})$ using essentially the same arguments as in my previous paper \cite{AP}. We use the Grothendieck--Ogg--Shafarevich formula to compute the $\mathbb C[G]$-module structure of the cohomology group $H^1(\mathcal C',R^1g'_*\mathbb C)$ in the third chapter. In the last section we combine these results and the easy direction of the Lefschetz-$(1,1)$ theorem to conclude the proof of our main result.
\end{cont}

\section{Equivariant Riemann--Roch for elliptic surfaces}

\begin{defn} In this chapter $G$ will denote a finite group. Let $X$ be a normal scheme which is of finite type over Spec$(\mathbb C)$. Assume that the finite group $G$ acts on $X$ on the left. Let $\mathcal F$ be a coherent sheaf on $X$. A $G$-linearization on $\mathcal F$ is a collection $\Psi=\{\psi_g\}_{g\in G}$ of isomorphisms $\psi_g:g_*(\mathcal F)\rightarrow\mathcal F$ for every $g\in G$ such that
\begin{enumerate}
\item[$(i)$] we have $\psi_1=\text{Id}_{\mathcal F}$,
\item[$(ii)$] for every $g$, $h\in G$ we have $\psi_{hg}=\psi_h\circ
h_*(\psi_g)$,
\end{enumerate}
where $h_*(\psi_g):(hg)_*(\mathcal F)=h_*(g_*(\mathcal F))\rightarrow h_*(\mathcal F)$ is the direct image of the map $\psi_g:g_*(\mathcal F)\rightarrow\mathcal F$ under the action of $h$. We define a $G$-sheaf over $X$ to be a sheaf on $X$ equipped with a $G$-linearisation. A coherent $G$-sheaf is a coherent sheaf on $X$ equipped with a $G$-linearisation $\Psi$ such that $\psi_g:g_*(\mathcal F)\rightarrow\mathcal F$ is $\mathcal O_X$-linear for every $g\in G$. 
\end{defn}
\begin{defn} Let $K(\mathbb C[G])$ denote the Grothendieck group of all finitely generated $\mathbb C[G]$-modules. For every finitely $\mathbb C[G]$-module $M$ let $[M]$ denote its class in $K(\mathbb C[G])$. Let $f:X\rightarrow Y$ be a $G$-cover and let $\mathcal F$ be a coherent $G$-sheaf on $X$. Also assume that $Y$ is proper over $\text{\rm Spec}(\mathbb C)$; then for every $n\in\mathbb N$ the cohomology group $H^n(X,\mathcal F)$ is a finitely generated $\mathbb C[G]$-module with respect to the natural $\mathbb C[G]$-action. Therefore the element:
$$\chi_G(X,\mathcal F)=\sum_{n\in\mathbb N}(-1)^n
[H^n(X,\mathcal F)]\in K(\mathbb C[G])$$
is well-defined.
\end{defn}
In addition to the assumptions above also suppose now that $f:X\rightarrow Y$ above is a map of smooth, projective curves over Spec$(\mathbb C)$. Let $\mathcal L$ be a line bundle on $Y$. The line bundle $f^*(\mathcal L)$ on $X$ is naturally equipped with the structure of a coherent $G$-sheaf. 
\begin{lemma} With the same notation and assumptions as above the following equation holds in $K(\mathbb C[G])$:
$$\chi_G(X,f^*(\mathcal L))=\chi_G(X,\mathcal O_X)+\deg(\mathcal L)[\mathbb C[G]].$$
\end{lemma}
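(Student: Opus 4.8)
The plan is to reduce the assertion to the computation of a single local term by a dévissage on $\mathrm{Pic}(Y)$. Set $\Phi(\mathcal L):=\chi_G(X,f^*(\mathcal L))-\deg(\mathcal L)\,[\mathbb C[G]]\in K(\mathbb C[G])$; what we must show is that $\Phi(\mathcal L)=\chi_G(X,\mathcal O_X)$ for every line bundle $\mathcal L$ on $Y$. This holds trivially for $\mathcal L=\mathcal O_Y$, since $f^*(\mathcal O_Y)=\mathcal O_X$ and $\deg(\mathcal O_Y)=0$. Because every line bundle on the curve $Y$ has the form $\mathcal O_Y(D)$ for a divisor $D$ that is a finite integral combination of closed points, it then suffices to prove that $\Phi(\mathcal L\otimes\mathcal O_Y(P))=\Phi(\mathcal L)$ for every line bundle $\mathcal L$ on $Y$ and every closed point $P\in Y$: one reaches an arbitrary $\mathcal L$ from $\mathcal O_Y$ by successively twisting by $\mathcal O_Y(\pm P_i)$.

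Since $\deg(\mathcal L(P))=\deg(\mathcal L)+1$, the desired identity $\Phi(\mathcal L(P))=\Phi(\mathcal L)$ is equivalent to $\chi_G(X,f^*(\mathcal L(P)))-\chi_G(X,f^*(\mathcal L))=[\mathbb C[G]]$. To obtain this, start from the short exact sequence of $\mathcal O_Y$-modules $0\to\mathcal L\to\mathcal L(P)\to\mathcal L(P)|_P\to 0$, where $\mathcal L(P)|_P$ is the skyscraper sheaf at $P$ with one-dimensional stalk. As $f$ is a finite morphism of smooth curves it is flat, so applying $f^*$ produces a short exact sequence of $\mathcal O_X$-modules $0\to f^*(\mathcal L)\to f^*(\mathcal L(P))\to f^*(\mathcal L(P)|_P)\to 0$ in which all three sheaves carry compatible $G$-linearizations; the induced long exact cohomology sequence is a sequence of $\mathbb C[G]$-modules, whence $\chi_G(X,f^*(\mathcal L(P)))=\chi_G(X,f^*(\mathcal L))+\chi_G(X,f^*(\mathcal L(P)|_P))$ in $K(\mathbb C[G])$. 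Moreover $\mathcal L(P)|_P$ is, non-canonically, isomorphic as an $\mathcal O_Y$-module to $\mathcal O_P:=\mathcal O_Y/\mathfrak m_P$, and the auxiliary one-dimensional twist is fixed by $G$, so $f^*(\mathcal L(P)|_P)\cong f^*(\mathcal O_P)$ as $G$-sheaves. Everything thus comes down to proving $\chi_G(X,f^*(\mathcal O_P))=[\mathbb C[G]]$.

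This last computation is the heart of the matter. The sheaf $f^*(\mathcal O_P)$ is supported on the finite set $f^{-1}(P)$, so $H^n(X,f^*(\mathcal O_P))=0$ for $n>0$ and $\chi_G(X,f^*(\mathcal O_P))=[H^0(X,f^*(\mathcal O_P))]$, where $H^0(X,f^*(\mathcal O_P))$ is the coordinate ring of the scheme-theoretic fibre $X\times_Y\mathrm{Spec}(\mathcal O_P)$, an Artinian $\mathbb C$-algebra of dimension $\deg(f)=|G|$ equipped with a $G$-action. Since $\pi$, hence $f$, is a Galois cover, $G$ acts transitively on the points of this fibre; fixing one such point $Q$ with decomposition group $G_Q\leq G$ of order $e$ (the ramification index of $Q$ over $P$) gives $H^0(X,f^*(\mathcal O_P))\cong\mathrm{Ind}_{G_Q}^G\bigl(\mathcal O_{X,Q}/\mathfrak m_P\mathcal O_{X,Q}\bigr)$. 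Here the characteristic-zero hypothesis enters: $G_Q$ is cyclic, and its action on the completed local ring $\widehat{\mathcal O}_{X,Q}\cong\mathbb C[[t]]$ can be linearized, i.e. $t$ may be chosen with $g\cdot t=\chi(g)\,t$ for a faithful character $\chi$ of $G_Q$; as a uniformizer at $P$ pulls back to a unit times $t^e$, the quotient $\mathcal O_{X,Q}/\mathfrak m_P\mathcal O_{X,Q}\cong\mathbb C[t]/(t^e)$ has $\mathbb C$-basis $1,t,\dots,t^{e-1}$ on which $G_Q$ acts through the $e$ distinct characters $1,\chi,\dots,\chi^{e-1}$, so it is the regular representation $\mathbb C[G_Q]$. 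Therefore $H^0(X,f^*(\mathcal O_P))\cong\mathrm{Ind}_{G_Q}^G\mathbb C[G_Q]\cong\mathbb C[G]$, which completes the argument. The main obstacle is exactly this local analysis at a point lying under the branch locus — identifying the fibre of $f_*\mathcal O_X$ with the regular representation even at a ramified point — which is where one genuinely uses that the ground field has characteristic zero, so that the local Galois action is tame and linearizable.
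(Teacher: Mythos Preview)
Your argument is correct. The paper itself does not give a proof but simply cites Lemma~5.6 of \cite{AP}; your d\'evissage reducing to the local identity $[H^0(X,f^*(\mathcal O_P))]=[\mathbb C[G]]$, verified via the linearizable tame cyclic inertia action on $\mathcal O_{X,Q}/\mathfrak m_P\mathcal O_{X,Q}\cong\mathbb C[G_Q]$, is the standard route and almost certainly coincides with what lies behind the cited reference.
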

\begin{proof} This is the content of Lemma 5.6 of \cite{AP} on page 524.
\end{proof}
\begin{notn} Suppose now that the map $f:X\rightarrow Y$ is the cover $\pi:\mathcal C'\rightarrow\mathcal C$ of the introduction and let $\Delta_E$ denote the discriminant of a relatively minimal elliptic surface $g:\mathcal E\rightarrow\mathcal C$ whose generic fibre is $E$. Assume that the ramification divisor of the cover $\pi$ has support disjoint from the conductor of $E$. Let $g':\mathcal E'\rightarrow\mathcal C'$ be the base change of the elliptic fibration $g:\mathcal E\rightarrow\mathcal C$ with respect to the map $\pi$. Note that the $\mathcal C'$-scheme $\mathcal E'$ is a relatively minimal regular model of the base change of $E$ to the function field of $\mathcal C'$ because $f$ does not ramify at the locus of the conductor of $E$. Moreover $\mathcal E'$ is equipped with a unique action of $G$ such that $g'$ is equivariant with respect to this action and the one on $\mathcal C'$.  
\end{notn}
\begin{thm} We have: 
$$[H^2(\mathcal E',\mathcal O_{\mathcal E'})]=
\frac{d_E}{12}[\mathbb C[G]]-\chi_G(\mathcal C',\mathcal O_{\mathcal C'}).$$
\end{thm}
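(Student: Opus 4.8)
The plan is to compute $H^2(\mathcal E',\mathcal O_{\mathcal E'})$ as a $\mathbb C[G]$-module by pushing forward along $g'$ and then invoking Lemma 2.4. First I would write down the Leray spectral sequence $H^p(\mathcal C',R^qg'_*\mathcal O_{\mathcal E'})\Rightarrow H^{p+q}(\mathcal E',\mathcal O_{\mathcal E'})$. Since $\mathcal C'$ is a curve and the fibres of $g'$ are curves, the only possibly nonzero terms on the second page lie in the square $0\leq p,q\leq 1$, so the spectral sequence degenerates there for dimension reasons; and as $g'$ is $G$-equivariant the whole spectral sequence carries a $G$-action with all its maps $G$-equivariant. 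In particular it yields an isomorphism $H^2(\mathcal E',\mathcal O_{\mathcal E'})\cong H^1(\mathcal C',R^1g'_*\mathcal O_{\mathcal E'})$ of $\mathbb C[G]$-modules, so it suffices to identify the class of the right-hand side in $K(\mathbb C[G])$.

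Next I would identify the coherent $G$-sheaf $R^1g'_*\mathcal O_{\mathcal E'}$. By Notation 2.3 the disjointness hypothesis guarantees that $\mathcal E'$ is the fibre product $\mathcal E\times_{\mathcal C}\mathcal C'$ and that $g'$ is the flat base change of $g$ along $\pi$; hence cohomology and base change gives $R^1g'_*\mathcal O_{\mathcal E'}\cong\pi^*(R^1g_*\mathcal O_{\mathcal E})$, compatibly with the natural $G$-linearizations, the one on the right being the canonical linearization of a pullback along the $G$-cover $\pi$. From the theory of minimal Weierstrass models, $R^1g_*\mathcal O_{\mathcal E}\cong\mathcal L^{-1}$ for a line bundle $\mathcal L$ on $\mathcal C$ with $\mathcal O_{\mathcal C}(\Delta_E)\cong\mathcal L^{\otimes 12}$, so $\deg\mathcal L=d_E/12$. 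Thus $R^1g'_*\mathcal O_{\mathcal E'}\cong\pi^*(\mathcal L^{-1})$ as $G$-sheaves.

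Finally I would apply Lemma 2.4 with $f=\pi$ and line bundle $\mathcal L^{-1}$ on $\mathcal C$, obtaining
$$\chi_G(\mathcal C',\pi^*(\mathcal L^{-1}))=\chi_G(\mathcal C',\mathcal O_{\mathcal C'})-\frac{d_E}{12}[\mathbb C[G]]$$
in $K(\mathbb C[G])$. Since $E$ is non-isotrivial we have $\deg\mathcal L>0$, hence $\deg\pi^*(\mathcal L^{-1})<0$ and $H^0(\mathcal C',\pi^*(\mathcal L^{-1}))=0$; therefore $\chi_G(\mathcal C',\pi^*(\mathcal L^{-1}))=-[H^1(\mathcal C',\pi^*(\mathcal L^{-1}))]=-[H^2(\mathcal E',\mathcal O_{\mathcal E'})]$. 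Substituting and rearranging gives the asserted identity.

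I expect the main obstacle to be the middle step: checking that $\mathcal E'$ really is the naive fibre product, so that flat base change applies without having to blow anything up, and that the resulting isomorphism $R^1g'_*\mathcal O_{\mathcal E'}\cong\pi^*(\mathcal L^{-1})$ is an isomorphism of $G$-sheaves, i.e. that the linearization induced by the $G$-action on $\mathcal E'$ coincides with the canonical one on $\pi^*(\mathcal L^{-1})$. The disjointness of the supports of $S$ and of the conductor of $E$ is precisely what is needed here; once this is in place the remainder is a formal manipulation in $K(\mathbb C[G])$ using Lemma 2.4, following essentially the arguments of \cite{AP}.
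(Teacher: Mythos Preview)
Your proof is correct and follows essentially the same approach as the paper: Leray spectral sequence for $g'$, identification of $R^1g'_*\mathcal O_{\mathcal E'}$ with the pullback of the inverse Hodge bundle, and then the equivariant Riemann--Roch lemma. Your version is in fact slightly more direct---you read off $H^2(\mathcal E',\mathcal O_{\mathcal E'})\cong H^1(\mathcal C',R^1g'_*\mathcal O_{\mathcal E'})$ immediately from the shape of the $E_2$-page and then kill the $H^0$ term using non-isotriviality, whereas the paper detours through matching $H^0$ and $H^1$ of $\mathcal E'$ and $\mathcal C'$ (the latter via the isomorphism $\textrm{Pic}(\mathcal C')\cong\textrm{Pic}(\mathcal E')$) before comparing Euler characteristics; note also that your ``Lemma~2.4'' and ``Notation~2.3'' are Lemma~2.3 and Notation~2.4 in the paper's numbering.
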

\begin{proof} By definition we have:
\begin{equation}
\chi_G(\mathcal E',\mathcal O_{\mathcal E'})=[H^0(\mathcal E',\mathcal O_{\mathcal E'})]-[H^1(\mathcal E',\mathcal O_{\mathcal E'})]+[H^2(\mathcal E',\mathcal O_{\mathcal E'})].
\end{equation}
By Lemma 4 of \cite{GSz} on page 79 the map $(g')^*:\text{Pic}(\mathcal C')\rightarrow\text{Pic}(\mathcal E')$ induced by Picard functoriality is an isomorphism. Because this map is equivariant with respect to the induced $G$-actions on Pic$(\mathcal C')$ and Pic$(\mathcal E')$, we get that $H^1(\mathcal E',\mathcal O_{\mathcal E'})\cong H^1(\mathcal C',\mathcal O_{\mathcal C'})$ as $\mathbb C[G]$-modules, since these modules are isomorphic to the tangent spaces at the zero of the abelian varieties $\text{Pic}(\mathcal E')$ and $\text{Pic}(\mathcal C')$, respectively. Obviously
$H^0(\mathcal E',\mathcal O_{\mathcal E'})\cong H^0(\mathcal C',\mathcal O_{\mathcal C'})$ as $\mathbb C[G]$-modules, hence from (2.5.1) we get that
\begin{equation}
[H^2(\mathcal E',\mathcal O_{\mathcal E'})]=\chi_G(\mathcal E',\mathcal O_{\mathcal E'})-\chi_G(\mathcal C',\mathcal O_{\mathcal C'}).
\end{equation}
Let $\Omega^1_{\mathcal E/\mathcal C}$, $\Omega^1_{\mathcal E'/\mathcal C'}$ denote the sheaf of relative K\"ahler differentials of the $\mathcal C$-scheme $\mathcal E$ and the $\mathcal C'$-scheme $\mathcal E'$, respectively. Let $\omega_{\mathcal E/\mathcal C}$, $\omega_{\mathcal E'/\mathcal C'}$ denote respectively the pull-back of $\Omega^1_{\mathcal E/\mathcal C}$ and $\Omega^1_{\mathcal E'/\mathcal C'}$ with respect to the zero section. These sheaves are line bundles on $\mathcal C$ and $\mathcal C'$, respectively. Moreover by Grothendieck's duality we have $R^1g'_*(\mathcal O_{\mathcal E'})=\omega^{\otimes-1}_{\mathcal E'/\mathcal C'}$. Because all boundary maps in the spectral sequence $H^p(\mathcal C',R^qg'_*(\mathcal O_{\mathcal E'}))
\Rightarrow H^{p+q}(\mathcal E',\mathcal O_{\mathcal E'})$ are $\mathbb C[G]$-linear we get from the above that
\begin{eqnarray}
\chi_G(\mathcal E',\mathcal O_{\mathcal E'})&=&
\chi_G(\mathcal C',\mathcal O_{\mathcal C'})-
\chi_G(\mathcal C',R^1g'_*(\mathcal O_{\mathcal E'}))\\
&=&
\chi_G(\mathcal C',\mathcal O_{\mathcal C'})-
\chi_G(\mathcal C',\omega^{\otimes-1}_{\mathcal E'/\mathcal C'}).
\nonumber
\end{eqnarray}
Combining (2.5.2) and (2.5.3) we get that
\begin{equation}
[H^2(\mathcal E',\mathcal O_{\mathcal E'})]=-\chi_G(\mathcal C',\omega^{\otimes-1}_{\mathcal E'/\mathcal C'}).
\end{equation}
By definition $\Delta_E$ is the zero divisor of a non-zero section of $\omega_{\mathcal E/\mathcal C}^{ \otimes12}$. Therefore $\deg(\Delta_E)=12\deg(\omega_{\mathcal E/\mathcal C})$. Moreover $\omega_{\mathcal E'/\mathcal C'}=\pi^*(\omega_{\mathcal E/\mathcal C})$. Hence (2.5.4) and Lemma 2.3 imply that
$$[H^2(\mathcal E',\mathcal O_{\mathcal E'})]=
\frac{\deg(\Delta_E)}{12}[\mathbb C[G]]-\chi_G(\mathcal C',\mathcal O_{\mathcal C'}).$$
\end{proof}

\section{Equivariant Grothendieck--Ogg--Shafarevich for elliptic surfaces}

\begin{notn} Let $X$ be a quasi-projective variety over $\mathbb C$. For every constructible sheaf $\mathcal F$ of complex vector spaces and for every $n\in\mathbb N$ let $H^n(X,\mathcal F)$ denote the $n$-th cohomology group of $\mathcal F$ on $X$ with respect to the analytical topology and let
$$\chi(X,\mathcal F)=\sum_{n\in\mathbb N}(-1)^n\dim_{\mathbb C}\big(
H^n(X,\mathcal F)\big)\in\mathbb N$$
denote the Euler-characteristic of $\mathcal F$. Assume now that $X$ be a smooth irreducible projective curve and let $\mathcal F$ be a constructible sheaf of complex vector spaces on $X$. There is an open subcurve $U\subseteq X$ such that $\mathcal F|_U$ is locally free of finite rank. We define $\textrm{\rm rank}(\mathcal F)$, the rank of $\mathcal F$, as the rank of $\mathcal F|_U$. For every $x\in X(\mathbb C)$ let $\mathcal F_x$ and $c_x(\mathcal F)$ denote the stalk of $\mathcal F$ at $x$ and the 
conductor of $\mathcal F$ at $x$ given by the formula:
$$c_x(\mathcal F)=\textrm{\rm rank}(\mathcal F)-
\dim_{\mathbb C}(\mathcal F_x),$$
respectively. Finally let
$$\textrm{\rm cond}(\mathcal F)=
\sum_{x\in X(\mathbb C)}c_x(\mathcal F).$$
\end{notn}
In this section our main tool will be the complex analytic version of the Grothen\-dieck--Ogg--Shafarevich formula:
\begin{thm}[Grothendieck--Ogg--Shafarevich] For every $X$ and $\mathcal F$ as above we have:
$$\chi(X,\mathcal F)=\textrm{\rm rank}(\mathcal F)\chi(X,\mathbb C)-
\textrm{\rm cond}(\mathcal F).$$
\end{thm}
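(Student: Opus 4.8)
The plan is to decompose $\mathcal F$ into a locally constant part on a suitable open curve and a punctual part, to compute the Euler characteristic of each, and to reassemble.

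First I would choose an open subcurve $U\subseteq X$ on which $\mathcal F$ is locally constant of rank $r=\operatorname{rank}(\mathcal F)$; set $S=X\setminus U$, a finite set, and let $j\colon U\hookrightarrow X$ and $i\colon S\hookrightarrow X$ denote the inclusions. The gluing short exact sequence
$$0\longrightarrow j_!j^*\mathcal F\longrightarrow\mathcal F\longrightarrow i_*i^*\mathcal F\longrightarrow0$$
yields, through the associated long exact cohomology sequence, the additivity relation $\chi(X,\mathcal F)=\chi(X,j_!(\mathcal F|_U))+\chi(X,i_*i^*\mathcal F)$. The second summand is immediate: $i_*i^*\mathcal F$ is concentrated on the finite discrete set $S$, so its higher cohomology vanishes and $\chi(X,i_*i^*\mathcal F)=\sum_{x\in S}\dim_{\mathbb C}\mathcal F_x$. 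For the first summand, since $X$ is compact with $S=X\setminus U$, one has $H^n(X,j_!(\mathcal F|_U))=H^n_c(U,\mathcal F|_U)$ by the definition of compactly supported cohomology, hence $\chi(X,j_!(\mathcal F|_U))=\chi_c(U,\mathcal F|_U)$.

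The one non-formal ingredient is the identity $\chi_c(U,\mathcal L)=r\cdot\chi_c(U)$ for a local system $\mathcal L$ of rank $r$ on $U$. I would prove it by choosing a finite triangulation of the compact real surface $X$ with the points of $S$ among its vertices. Then $X$ is the disjoint union of its open cells, each of which either lies in $U$ or is a vertex belonging to $S$; by additivity of the compactly supported Euler characteristic along the filtration by skeleta, $\chi_c(U,\mathcal L)=\sum_{\sigma^\circ\subseteq U}\chi_c(\sigma^\circ,\mathcal L|_{\sigma^\circ})$. Each open cell $\sigma^\circ$ is contractible, so $\mathcal L|_{\sigma^\circ}$ is constant of rank $r$ and $\chi_c(\sigma^\circ,\mathcal L|_{\sigma^\circ})=r\,(-1)^{\dim\sigma}$; summing over the cells contained in $U$ gives $r\sum_{\sigma^\circ\subseteq U}(-1)^{\dim\sigma}=r\,\chi_c(U)$. (Alternatively, Poincar\'e duality on the oriented surface $U$ identifies $\chi_c(U,\mathcal L)$ with $\chi(U,\mathcal L^\vee)$ and reduces the claim to the classical fact that the Euler characteristic of a finite CW complex with coefficients in a local system equals the rank times the ordinary Euler characteristic.)

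Finally I would carry out the bookkeeping. Additivity of $\chi_c$ for the partition $X=U\sqcup S$ gives $\chi_c(U)=\chi(X,\mathbb C)-|S|$. Since the stalk of $\mathcal F$ at any point of $U$ has dimension $r$, we have $c_x(\mathcal F)=0$ there, so $\operatorname{cond}(\mathcal F)=\sum_{x\in S}\bigl(r-\dim_{\mathbb C}\mathcal F_x\bigr)=r|S|-\sum_{x\in S}\dim_{\mathbb C}\mathcal F_x$. Combining these,
$$\chi(X,\mathcal F)=r\bigl(\chi(X,\mathbb C)-|S|\bigr)+\sum_{x\in S}\dim_{\mathbb C}\mathcal F_x=r\,\chi(X,\mathbb C)-\operatorname{cond}(\mathcal F),$$
which is the assertion. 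I expect the only real obstacle to be the local-system computation of the third paragraph; the rest is a diagram chase and a dimension count.
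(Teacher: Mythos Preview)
Your argument is correct. The paper does not actually give a self-contained proof of this statement: it merely remarks that the claim can be reduced to the Grothendieck--Ogg--Shafarevich formula for torsion constructible sheaves (citing Raynaud's S\'eminaire Bourbaki expos\'e) via ``a standard set of arguments'' and omits the details. Your route is genuinely different and more elementary: rather than passing through torsion or $\ell$-adic coefficients and a comparison theorem, you work directly in the analytic topology, using the excision triangle $0\to j_!j^*\mathcal F\to\mathcal F\to i_*i^*\mathcal F\to 0$ together with the multiplicativity $\chi_c(U,\mathcal L)=\operatorname{rank}(\mathcal L)\,\chi_c(U)$ for local systems, which you extract from a triangulation of the compact Riemann surface $X$ and the additivity of $\chi_c$ over the cell decomposition. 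This yields a proof that stays entirely within classical sheaf theory on topological spaces; since we are over $\mathbb C$ and there is no wild ramification, nothing is lost by avoiding the \'etale machinery. The paper's route, by contrast, treats the analytic statement as a shadow of the deeper arithmetic formula---natural if one is already thinking \'etale, but heavier than what is needed here.
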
 
\begin{proof} The theorem could be easily reduced to the Grothen\-dieck--Ogg--Shafarevich formula for torsion constructible sheaves (see Th\'eor\`eme 1 of \cite{Ra} on page 133) using a standard set of arguments. We omit the details.
\end{proof}
\begin{lemma} Let $\mathcal F$ and $\mathcal G$ be constructible sheaves of complex vector spaces on $X$. Assume that $\mathcal G$ is locally free in a neighbourhood of $x\in X(\mathbb C)$. Then
$$c_x(\mathcal F\otimes\mathcal G)=\textrm{\rm rank}(\mathcal G)c_x(\mathcal F).$$
\end{lemma}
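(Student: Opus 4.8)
The claim is local at $x$, so the plan is to pass to a small enough punctured neighbourhood of $x$ where both $\mathcal{F}$ and $\mathcal{G}$ become locally constant, and then to compare the stalk of $\mathcal{F}\otimes\mathcal{G}$ at $x$ with the generic rank of $\mathcal{F}\otimes\mathcal{G}$. Concretely, since $\mathcal{G}$ is locally free near $x$, shrink to an open $U\ni x$ on which $\mathcal{G}|_U$ is locally free of rank $m=\textrm{rank}(\mathcal{G})$; shrinking $U$ further we may assume that $\mathcal{F}|_{U\setminus\{x\}}$ is locally constant of rank $r=\textrm{rank}(\mathcal{F})$, so that $\textrm{rank}(\mathcal{F}\otimes\mathcal{G})=rm$ as well. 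The first step is thus purely bookkeeping: reduce to a single point and a single chart where the only failure of local constancy of $\mathcal{F}\otimes\mathcal{G}$ comes from $\mathcal{F}$ at $x$ itself.

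The second step is to compute the stalk of the tensor product. For sheaves of complex vector spaces the tensor product $\mathcal{F}\otimes\mathcal{G}$ is the sheafification of the presheaf $V\mapsto \mathcal{F}(V)\otimes_{\mathbb{C}}\mathcal{G}(V)$, and stalks commute with tensor product, i.e.\ $(\mathcal{F}\otimes\mathcal{G})_x\cong\mathcal{F}_x\otimes_{\mathbb{C}}\mathcal{G}_x$. Since $\mathcal{G}$ is locally free of rank $m$ near $x$, its stalk $\mathcal{G}_x$ is a free $\mathbb{C}$-module of rank $m$ (the fibre and the stalk agree here because the sheaf is constant on a neighbourhood), so
$$\dim_{\mathbb{C}}\big((\mathcal{F}\otimes\mathcal{G})_x\big)=\dim_{\mathbb{C}}(\mathcal{F}_x)\cdot\dim_{\mathbb{C}}(\mathcal{G}_x)=m\cdot\dim_{\mathbb{C}}(\mathcal{F}_x).$$
Plugging this into the defining formula for the conductor gives
$$c_x(\mathcal{F}\otimes\mathcal{G})=rm-m\dim_{\mathbb{C}}(\mathcal{F}_x)=m\big(r-\dim_{\mathbb{C}}(\mathcal{F}_x)\big)=\textrm{rank}(\mathcal{G})\,c_x(\mathcal{F}),$$
which is exactly the asserted identity.

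The main point requiring care — the only genuine obstacle — is the compatibility of the stalk functor with $\otimes$ and the identification of $\mathcal{G}_x$ with a genuinely free module of the correct rank. One must be slightly careful that "locally free of finite rank" for constructible sheaves of complex vector spaces here means locally isomorphic to a constant sheaf $\underline{\mathbb{C}}^{\,m}$, so that on a small neighbourhood $U$ of $x$ we really have $\mathcal{G}|_U\cong\underline{\mathbb{C}}^{\,m}_U$ and hence $\mathcal{G}_x\cong\mathbb{C}^m$; then $(\mathcal{F}\otimes\mathcal{G})|_U\cong\mathcal{F}|_U^{\oplus m}$ and both the generic rank and the stalk dimension at $x$ scale by $m$. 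Once this is set up the rest is the one-line computation above, so I expect the write-up to be short.
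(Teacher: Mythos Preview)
Your proof is correct and follows exactly the same approach as the paper's own argument: use that stalks commute with tensor product so $(\mathcal F\otimes\mathcal G)_x=\mathcal F_x\otimes\mathcal G_x$ with $\dim_{\mathbb C}\mathcal G_x=\textrm{rank}(\mathcal G)$, and that ranks multiply under tensor product, then subtract. The paper's write-up is just the two-line version of what you wrote.
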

\begin{proof} Because $\mathcal G$ is locally free in a neighbourhood of $x\in X(\mathbb C)$, we have:
$$(\mathcal F\otimes\mathcal G)_x=\mathcal F_x\otimes\mathcal G_x
\textrm{\rm, and hence }\dim_{\mathbb C}
\big((\mathcal F\otimes\mathcal G)_x\big)=\textrm{\rm rank}(\mathcal G)
\dim_{\mathbb C}(\mathcal F_x).$$
Since
$$\textrm{\rm rank}(\mathcal F\otimes\mathcal G)=
\textrm{\rm rank}(\mathcal G)\textrm{\rm rank}(\mathcal F),$$
the claim follows immediately.
\end{proof}
\begin{defn} Let $G$ continue to denote a finite group. Let $X$ be a quasi-projective variety over $\mathbb C$ and assume that $G$ acts on $X$ on the left. Let $\mathcal F$ be a constructible sheaf of complex vector spaces on $X$. A $G$-linearisation on $\mathcal F$ is a collection $\Psi=\{\psi_g\}_{g\in G}$ of $\mathbb C$-linear isomorphisms of sheaves $\psi_g:g_*(\mathcal F)\rightarrow\mathcal F$ for every $g\in G$ such that
\begin{enumerate}
\item[$(i)$] we have $\psi_1=\text{Id}_{\mathcal F}$,
\item[$(ii)$] for every $g$, $h\in G$ we have $\psi_{hg}=\psi_h\circ
h_*(\psi_g)$,
\end{enumerate}
where $h_*(\psi_g):(hg)_*(\mathcal F)=h_*(g_*(\mathcal F))\rightarrow h_*(\mathcal F)$ is the direct image of the map $\psi_g:g_*(\mathcal F)\rightarrow\mathcal F$ under the action of $h$. We define a complex constructible $G$-sheaf over $X$ to be a constructible sheaf of complex vector spaces on $X$ equipped with a $G$-linearisation. 
\end{defn}
\begin{defn} Let $X$ be a quasi-projective variety over $\mathbb C$. Let $f:X\rightarrow Y$ be a $G$-cover and let $\mathcal F$ be a complex constructible $G$-sheaf on $X$. Then for every $n\in\mathbb N$ the cohomology group $H^n(X,\mathcal F)$ is a finitely generated $\mathbb C[G]$-module with respect to the natural $\mathbb C[G]$-action. Therefore the element:
$$\chi_G(X,\mathcal F)=\sum_{n\in\mathbb N}(-1)^n
[H^n(X,\mathcal F)]\in K(\mathbb C[G])$$
is well-defined.
\end{defn}
In addition to the assumptions above also suppose now that $f:X\rightarrow Y$ above is a map of smooth, projective curves over Spec$(\mathbb C)$. Let $\mathcal F$ be a constructible sheaf of complex vector spaces on $Y$. Then $f^*(\mathcal F)$ is naturally equipped with the structure of a complex constructible $G$-sheaf on $X$. Let $j:U\rightarrow Y$ be an open immersion and assume that $\mathcal F|_U$ is a locally free of finite rank. Assume that the map $f$ does not ramify on the complement of $U$ in $Y$. 
\begin{thm} With the same notation and assumptions as above the following equation holds in $K(\mathbb C[G])$:
$$\chi_G(X,f^*(\mathcal F))=\textrm{\rm rank}(\mathcal F)\chi_G(X,\mathbb C)-
\textrm{\rm cond}(\mathcal F)[\mathbb C[G]].$$
\end{thm}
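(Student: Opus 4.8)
The plan is to prove the identity directly on characters. Since $\mathbb C[G]$ is semisimple, a class in $K(\mathbb C[G])$ is determined by the trace of each $g\in G$ acting on it, so it suffices to show that for every $g\in G$ the trace of $g$ on $\chi_G(X,f^*(\mathcal F))=\sum_n(-1)^n[H^n(X,f^*(\mathcal F))]$ agrees with its trace on $\operatorname{rank}(\mathcal F)\chi_G(X,\mathbb C)-\operatorname{cond}(\mathcal F)[\mathbb C[G]]$. Write $r=\operatorname{rank}(\mathcal F)$, let $Z=Y\setminus U$ (a finite set of points, over which $f$ is unramified by assumption), and put $U'=f^{-1}(U)$, $Z'=f^{-1}(Z)$; then each point of $Z$ has exactly $|G|$ preimages, $G$ acts freely on $Z'$, and the conductor of $\mathcal F$ is supported on $Z$.

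For $g=1$ the trace is the ordinary Euler characteristic, and I would simply apply Theorem 3.2 on $X$ to the constructible sheaf $f^*(\mathcal F)$, which still has rank $r$ in the sense of Notation 3.1: this gives $\chi(X,f^*(\mathcal F))=r\chi(X,\mathbb C)-\operatorname{cond}(f^*(\mathcal F))$. Pulling back preserves local freeness away from $Z'$ and the identifications $(f^*\mathcal F)_x=\mathcal F_{f(x)}$ at points $x\in Z'$, so $c_x(f^*\mathcal F)=c_{f(x)}(\mathcal F)$ there; since $f$ has degree $|G|$ this yields $\operatorname{cond}(f^*\mathcal F)=|G|\operatorname{cond}(\mathcal F)$, which is exactly the dimension of the right-hand side.

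For $g\neq 1$ the fixed locus $X^g$ is finite (as $G$ acts faithfully on the curve $X$) and, since $f$ is unramified over $Z$, it is disjoint from $Z'$, hence $X^g\subseteq U'$. Stratifying $X$ into the $\langle g\rangle$-stable pieces $X^g$, $Z'$ and $U'\setminus X^g$ and using additivity of $\sum_n(-1)^n\operatorname{tr}(g\mid H^n_c(-,-))$, the trace of $g$ on $\chi_G(X,f^*(\mathcal F))$ breaks into three terms. The term from $Z'$ vanishes because $\langle g\rangle$ permutes the skyscraper stalks of $f^*(\mathcal F)$ over $Z'$ freely, so the corresponding $H^0$ is induced from the trivial subgroup. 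The term from $U'\setminus X^g$ also vanishes: there $f^*(\mathcal F)$ is locally constant of rank $r$ and $\langle g\rangle$ acts freely, so pushing forward to the quotient curve and decomposing into $\langle g\rangle$-isotypic locally constant sheaves — which all have rank $r$ and the same stalk dimensions (each stalk of the pushforward being a free $\mathbb C[\langle g\rangle]$-module), hence by Theorem 3.2 the same compactly supported Euler characteristic — makes this term a multiple of $\sum_{\zeta^{\operatorname{ord}(g)}=1}\zeta=0$. The term from $X^g$ is $\sum_{p\in X^g}\operatorname{tr}(g\mid(f^*\mathcal F)_p)$; the crucial point is that each such $p$ lies over a point of $U$, on a small disc around which $\mathcal F$ is locally constant and therefore (the disc being simply connected) \emph{constant}, so $f^*(\mathcal F)$ is constant near $p$, $g$ acts trivially on its stalk, and $\operatorname{tr}(g\mid(f^*\mathcal F)_p)=r$. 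Hence the trace of $g$ on the left-hand side is $r|X^g|$. On the right-hand side $\operatorname{cond}(\mathcal F)[\mathbb C[G]]$ contributes $0$, while by the topological Lefschetz fixed point theorem the trace of $g$ on $\chi_G(X,\mathbb C)$ is $\chi(X^g)=|X^g|$, so the right-hand side also has trace $r|X^g|$, completing the proof.

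The main obstacle is the $g\neq 1$ case. One has to set up the decomposition of the equivariant Euler characteristic into a fixed-point contribution and a ``free'' contribution, check the vanishing of the free contribution via Theorem 3.2 together with the cancellation of roots of unity, and — above all — notice that $f^*(\mathcal F)$ is genuinely \emph{constant}, not merely locally constant, near each fixed point of $g$; this is what pins the local traces down to $r$ rather than to some twisted quantity, and it is precisely here that the hypothesis that $f$ is unramified over $Y\setminus U$ enters. The remaining ingredients — semisimplicity of $\mathbb C[G]$, additivity of Euler characteristics, and the behaviour of rank, stalks and conductors under $f^*$ — are routine.
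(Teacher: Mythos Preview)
Your argument is correct and takes a genuinely different route from the paper. The paper works entirely on the base $Y$: it decomposes $f_*(\mathbb C)=\bigoplus_{\alpha}\mathcal G_\alpha^{\oplus\operatorname{rank}(\alpha)}$ into isotypic pieces, uses that $\Delta_\alpha(\chi_G(X,f^*\mathcal F))=\chi(Y,\mathcal F\otimes\mathcal G_\alpha)$, and then applies the ordinary Grothendieck--Ogg--Shafarevich formula (Theorem~3.2) to $\mathcal F\otimes\mathcal G_\alpha$ and to $\mathcal G_\alpha$ separately; the disjointness hypothesis enters through Lemma~3.3, which computes $\operatorname{cond}(\mathcal F\otimes\mathcal G_\alpha)$ using that at every point at least one tensor factor is locally constant. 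You instead work upstairs on $X$, checking the identity conjugacy class by conjugacy class via a Lefschetz--type stratification, and the disjointness hypothesis is used to force $X^g\subseteq U'$. The paper's approach is a bit slicker in that it never leaves the non-equivariant formula on a projective curve, while yours is more geometric and makes the role of the fixed points transparent.

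Two small technical points. First, Theorem~3.2 as stated is for \emph{projective} curves, so when you invoke it on the open curve $(U'\setminus X^g)/\langle g\rangle$ you should either extend each isotypic piece $\mathcal H_\zeta$ by zero to $X/\langle g\rangle$ (each then has conductor $r$ at every boundary point, independently of $\zeta$, so Theorem~3.2 gives them all the same $\chi$) or simply note that $\chi_c$ of a rank-$r$ local system on an open curve equals $r$ times $\chi_c$ of the curve. Second, your emphasis on $f^*\mathcal F$ being \emph{constant} near a fixed point is more than is needed: the tautological $G$-linearisation on $f^*\mathcal F$ (coming from $f\circ g=f$) already acts as the identity on every stalk $(f^*\mathcal F)_p=\mathcal F_{f(p)}$; what the hypothesis $p\in U'$ buys is that this stalk is $r$-dimensional, making the trace equal to $r$.
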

\begin{proof} Let $R(G)$ denote the set of irreducible $\mathbb C$-valued characters of $G$. For every $\alpha\in R(G)$ let $\Delta_{\alpha}:K(\mathbb C[G])\rightarrow\mathbb Z$ denote the unique homomorphism such that $\Delta_{\alpha}([M])$ is the multiplicity of $\alpha$ in the character of $M$ for every finitely generated $\mathbb C[G]$-module $M$ and let rank$(\alpha)$ denote the rank of $\alpha$. It will be enough to show that
$$\Delta_{\alpha}\big(\chi_G(X,f^*(\mathcal F))\big)
=\textrm{\rm rank}(\mathcal F)
\Delta_{\alpha}\big(\chi_G(X,\mathbb C)\big)-\textrm{\rm rank}(\alpha)
\textrm{\rm cond}(\mathcal F),\quad(\forall\alpha\in R(G)).$$
Note that there is a unique decomposition:
$$f_*(\mathbb C)=\bigoplus_{\alpha\in R(G)}\mathcal G_{\alpha}^
{\oplus\textrm{\rm rank}(\alpha)}$$
in the category of constructible sheaves of complex vector spaces such that for every $\alpha\in R(G)$ the rank of $\mathcal G_{\alpha}$ is equal to $\textrm{\rm rank}(\alpha)$ and the natural $\mathbb C[G]$-action on $H^0(f^{-1}(V),f^*(\mathcal G_{\alpha}))$ has character $\alpha$ where $V\subseteq Y$ is an open subcurve such that the restriction of $f$ onto $f^{-1}(V)$ is unramified. Since for every $\alpha\in R(G)$ we have:
\begin{eqnarray}
\Delta_{\alpha}\big(\chi_G(X,f^*(\mathcal F)\big)
&=&\chi(Y,\mathcal F\otimes\mathcal G_{\alpha})
,\nonumber\\
\Delta_{\alpha}\big(\chi_G(X,\mathbb C)\big)
&=&\chi(Y,\mathcal G_{\alpha}),\nonumber
\end{eqnarray}
it will be enough to show for every such $\alpha$ that
\begin{equation}
\chi(Y,\mathcal F\otimes\mathcal G_{\alpha})
=\textrm{\rm rank}(\mathcal F)
\chi(Y,\mathcal G_{\alpha})-\textrm{\rm rank}(\mathcal G_{\alpha})
\textrm{\rm cond}(\mathcal F).\end{equation}
By Theorem 3.2 for every $\alpha\in R(G)$ we have:
\begin{eqnarray}
\ \ \chi(Y,\mathcal F\otimes\mathcal G_{\alpha})
&\!\!\!\!\!=\!\!\!\!\!&\textrm{\rm rank}(\mathcal F)\textrm{\rm rank}
(\mathcal G_{\alpha})
\chi(Y,\mathbb C)-
\textrm{\rm cond}(\mathcal F\otimes\mathcal G_{\alpha}),\\
\ \ \chi(Y,\mathcal G_{\alpha})&\!\!\!\!\!=
\!\!\!\!\!&\textrm{\rm rank}(\mathcal G_{\alpha})
\chi(Y,\mathbb C)-\textrm{\rm cond}(\mathcal G_{\alpha}).
\end{eqnarray}
Since for every $x\in Y(\mathbb C)$ either $\mathcal F$ or $G_{\alpha}$ is locally constant on a neighbourhood of $x$, using Lemma 3.3 we get:
\begin{equation}
\textrm{\rm cond}(\mathcal F\otimes\mathcal G_{\alpha})=
\textrm{\rm rank}(\mathcal G_{\alpha})\textrm{\rm cond}(\mathcal F)
+\textrm{\rm rank}(\mathcal F)\textrm{\rm cond}(\mathcal G_{\alpha}),
\quad(\forall\alpha\in R(G)).
\end{equation}
Now equation (3.6.1) follows from combining (3.6.4) with equations (3.6.2) and (3.6.3).
\end{proof}
Suppose again that the map $f:X\rightarrow Y$ is the cover $\pi:\mathcal C'\rightarrow\mathcal C$ of the introduction. Let $g:\mathcal E\rightarrow\mathcal C$ and $g':\mathcal E'\rightarrow\mathcal C'$ be as above. Then $R^1g'_*(\mathbb C)$ is a complex constructible $G$-sheaf on $\mathcal C'$. Assume that the ramification divisor of the cover $\pi$ has support disjoint from the conductor of $E$ and let $c_E$ denote the degree of the conductor of $E$ as in the introduction. 
\begin{thm} We have: 
$$[H^1(\mathcal C',R^1g'_*(\mathbb C))]
=c_E[\mathbb C[G]]-2\chi_G(\mathcal C',\mathbb C).$$
\end{thm}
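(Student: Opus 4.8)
Here is how I would approach the proof.

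The plan is to apply the equivariant Grothendieck--Ogg--Shafarevich formula of Theorem 3.6 to the complex constructible $G$-sheaf $R^1g'_*(\mathbb C)$ on $\mathcal C'$, to identify the resulting conductor term with $c_E$, and then to read off $[H^1(\mathcal C',R^1g'_*(\mathbb C))]$ from the equivariant Euler characteristic after checking that $H^0$ and $H^2$ vanish. First, since the support of the ramification divisor of $\pi$ is disjoint from the conductor of $E$, the Cartesian square of Notation 2.4 together with proper base change yields a canonical isomorphism $R^1g'_*(\mathbb C)\cong\pi^*(R^1g_*(\mathbb C))$ of complex constructible $G$-sheaves on $\mathcal C'$. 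Let $U\subseteq\mathcal C$ be the open locus of good reduction of $E$, that is, the complement of the support of the conductor of $E$. Then $R^1g_*(\mathbb C)|_U$ is a local system of rank $2$, and $\pi$ is unramified over $\mathcal C\setminus U$ by hypothesis, so Theorem 3.6 applies with $\mathcal F=R^1g_*(\mathbb C)$ and gives
$$\chi_G(\mathcal C',R^1g'_*(\mathbb C))=2\,\chi_G(\mathcal C',\mathbb C)-\textrm{\rm cond}(R^1g_*(\mathbb C))[\mathbb C[G]].$$

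Next I would verify that $\textrm{\rm cond}(R^1g_*(\mathbb C))=c_E$. For $x\in\mathcal C(\mathbb C)$ proper base change identifies $(R^1g_*(\mathbb C))_x$ with $H^1(\mathcal E_x,\mathbb C)$, so $c_x(R^1g_*(\mathbb C))=2-\dim_{\mathbb C}H^1(\mathcal E_x,\mathbb C)$. Since $\mathcal E$ is smooth, a glance at Kodaira's classification of the singular fibres shows that this equals $2-f_x$, where $f_x$ is the conductor exponent of $E$ at $x$: an additive fibre is topologically a tree of projective lines, so $H^1$ vanishes (matching $f_x=2$ in characteristic zero), while a multiplicative fibre of type $I_n$ is a cycle of $n$ projective lines, or a nodal cubic when $n=1$, so $H^1$ is one-dimensional (matching $f_x=1$), and a smooth fibre gives $f_x=0$. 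Summing over $x$ gives $\textrm{\rm cond}(R^1g_*(\mathbb C))=\sum_x f_x=c_E$.

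It remains to show that $H^0(\mathcal C',R^1g'_*(\mathbb C))=0$ and $H^2(\mathcal C',R^1g'_*(\mathbb C))=0$; granting this, $\chi_G(\mathcal C',R^1g'_*(\mathbb C))=-[H^1(\mathcal C',R^1g'_*(\mathbb C))]$ and the assertion follows from the displayed identity. Put $U'=\pi^{-1}(U)$ and $\mathcal L=R^1g'_*(\mathbb C)|_{U'}$; this is a rank-$2$ local system, self-dual via the cup-product pairing on the elliptic fibres, and since $E$ is non-isotrivial it underlies a non-constant polarised variation of Hodge structure of weight $1$. The constant part of that variation is a sub-variation of weight $1$; as there is no rank-one real Hodge structure of odd weight it has rank $0$ or $2$, and rank $2$ would force $E$ to be isotrivial, so $\mathcal L$ has no nonzero global monodromy invariants. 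For $H^2$: the short exact sequence $0\to j_!\mathcal L\to R^1g'_*(\mathbb C)\to\mathcal Q\to 0$, where $j:U'\hookrightarrow\mathcal C'$ is the inclusion and $\mathcal Q$ the skyscraper quotient, gives (as $\mathcal Q$ has no higher cohomology) a surjection $H^2_c(U',\mathcal L)=H^2(\mathcal C',j_!\mathcal L)\twoheadrightarrow H^2(\mathcal C',R^1g'_*(\mathbb C))$, and Poincar\'e duality on the smooth curve $U'$ identifies $H^2_c(U',\mathcal L)$ with the dual of $H^0(U',\mathcal L^\vee)$, the invariants of $\mathcal L^\vee\cong\mathcal L$, which vanish. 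For $H^0$: in the five-term exact sequence of the Leray spectral sequence for $g'$,
$$0\to H^1(\mathcal C',\mathbb C)\to H^1(\mathcal E',\mathbb C)\to H^0(\mathcal C',R^1g'_*(\mathbb C))\xrightarrow{\,d_2\,}H^2(\mathcal C',\mathbb C)\xrightarrow{(g')^*}H^2(\mathcal E',\mathbb C),$$
the edge map $(g')^*$ on $H^2$ is injective because $g'$ has the zero section, forcing $d_2=0$; and the first map is injective for the same reason and is an isomorphism because $\dim_{\mathbb C}H^1(\mathcal E',\mathbb C)=2\dim_{\mathbb C}H^1(\mathcal E',\mathcal O_{\mathcal E'})=2\dim_{\mathbb C}H^1(\mathcal C',\mathcal O_{\mathcal C'})=\dim_{\mathbb C}H^1(\mathcal C',\mathbb C)$, using the isomorphism $H^1(\mathcal E',\mathcal O_{\mathcal E'})\cong H^1(\mathcal C',\mathcal O_{\mathcal C'})$ of the proof of Theorem 2.5 together with Hodge symmetry on the smooth projective surface $\mathcal E'$; a short diagram chase then gives $H^0(\mathcal C',R^1g'_*(\mathbb C))=0$.

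The main obstacle is this last step. The Euler characteristic identity is a formal consequence of Theorem 3.6 once the conductor is computed, but the vanishing of $H^0(\mathcal C',R^1g'_*(\mathbb C))$ and $H^2(\mathcal C',R^1g'_*(\mathbb C))$ genuinely uses the geometry of non-isotrivial elliptic surfaces --- above all the fact that such a family carries no nonzero global monodromy invariants on its first cohomology.
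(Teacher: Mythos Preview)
Your proof is correct and follows the same strategy as the paper: identify $R^1g'_*(\mathbb C)$ with $\pi^*(R^1g_*(\mathbb C))$ via proper base change, apply Theorem~3.6, and use the vanishing of $H^0(\mathcal C',R^1g'_*(\mathbb C))$ and $H^2(\mathcal C',R^1g'_*(\mathbb C))$ to read off $[H^1]$ from the equivariant Euler characteristic. The only difference is one of detail: the paper simply cites Cox--Zucker for the vanishing of $H^0$ and $H^2$ and asserts $\textrm{cond}(R^1g_*(\mathbb C))=c_E$ without further comment, whereas you supply self-contained arguments for both (the fibrewise conductor computation via Kodaira's list, and the vanishing via monodromy invariants of the non-isotrivial VHS together with the Leray five-term sequence and the dimension count $h^1(\mathcal E')=h^1(\mathcal C')$).
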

\begin{proof} By definition we have:
$$\chi_G(\mathcal C',R^1g'_*(\mathbb C))=
[H^0(\mathcal C',R^1g'_*(\mathbb C))]-
[H^1(\mathcal C',R^1g'_*(\mathbb C))]+
[H^2(\mathcal C',R^1g'_*(\mathbb C))].$$
As noted in the paragraph above Lemma 1.4 of \cite{CZ} on page 5 we have:
$$H^0(\mathcal C',R^1g'_*(\mathbb C))=0=
H^2(\mathcal C',R^1g'_*(\mathbb C)),$$
and hence
$$\chi_G(\mathcal C',R^1g'_*(\mathbb C))=-[H^1(\mathcal C',R^1g'_*(\mathbb C))].$$
By the proper base change theorem the complex constructible $G$-sheaves $R^1g'_*(\mathbb C)$ and $\pi^*(R^1g_*(\mathbb C))$ are isomorphic. Therefore we may use Theorem 3.6 to get
$$\chi_G(\mathcal C',R^1g'_*(\mathbb C))=\textrm{\rm rank}(R^1g_*(\mathbb C))
\chi_G(\mathcal C',\mathbb C)-
\textrm{\rm cond}(R^1g_*(\mathbb C))[\mathbb C[G]].$$
Since $\textrm{\rm rank}(R^1g_*(\mathbb C))=2$ by the proper base change theorem and $\textrm{\rm cond}(R^1g_*(\mathbb C))=c_E$, the claim is now clear.
\end{proof}

\section{Rank bounds and Hodge theory}

\begin{notn} For every smooth projective variety $X$ over $\mathbb C$ let $\textrm{NS}(X)$ be the N\'eron--Severi group of $X$ and let
$$c_1:\textrm{NS}(X)\longrightarrow H^1(X,\Omega^1_{X/\mathbb C})$$
be the Chern class map of de Rham cohomology. Let $s:\mathcal C'\rightarrow\mathcal E'$ be the zero section of the elliptic fibration $g':\mathcal E'\rightarrow\mathcal C'$ and let $T(\mathcal E')\leq\textrm{NS}(\mathcal E')$ be the subgroup generated by the algebraic equivalence classes of $s(\mathcal C')$ and the irreducible components of the fibres of $g'$. Finally let $T_{\textrm{\rm dR}}(\mathcal E')$ be the $\mathbb C$-linear span of the image of $T(\mathcal E')$ with respect to $c_1$.
\end{notn}
\begin{lemma} There is an $\mathbb C[G]$-linear injection:
$$E(F')\otimes\mathbb C\longrightarrow H^1(\mathcal E',\Omega^1_{\mathcal E'/\mathbb C})/T_{\textrm{\rm dR}}(\mathcal E').$$
\end{lemma}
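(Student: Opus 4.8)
The plan is to realise $E(F')\otimes\mathbb C$ as a subquotient of $\textrm{NS}(\mathcal E')\otimes\mathbb C$ by means of the Shioda--Tate theorem, and then to transport this picture into de Rham cohomology using the injectivity of the cycle class map, i.e. the easy direction of the Lefschetz-$(1,1)$ theorem.

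First I would recall that, by Notation 2.4, $g':\mathcal E'\to\mathcal C'$ is a relatively minimal elliptic surface over $\mathbb C$ equipped with the zero section $s$, so the Shioda--Tate theorem furnishes an exact sequence of abelian groups
$$0\longrightarrow T(\mathcal E')\longrightarrow\textrm{NS}(\mathcal E')\longrightarrow E(F')\longrightarrow 0,$$
in which the last arrow restricts a divisor class to the generic fibre $E_{F'}$ and records the resulting point of $E(F')=\textrm{Pic}^0(E_{F'})$ after subtracting the appropriate multiple of the zero section. Since $G$ acts on $\mathcal E'$ compatibly with its action on $\mathcal C'$ and fixes the image $s(\mathcal C')$ of the zero section, it permutes the fibral components of $g'$ and preserves $s(\mathcal C')$; hence $T(\mathcal E')$ is a $G$-stable subgroup of $\textrm{NS}(\mathcal E')$ and the whole sequence is $G$-equivariant, where $G$ acts on $E(F')$ through its identification with $\textrm{Gal}(F'|F)$. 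Tensoring with $\mathbb C$, which is flat, yields a short exact sequence of $\mathbb C[G]$-modules
$$0\longrightarrow T(\mathcal E')\otimes\mathbb C\longrightarrow\textrm{NS}(\mathcal E')\otimes\mathbb C\longrightarrow E(F')\otimes\mathbb C\longrightarrow 0.$$

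Next I would apply Hodge theory to the smooth projective surface $\mathcal E'$. The cycle class map $\textrm{NS}(\mathcal E')\to H^2(\mathcal E',\mathbb Q)$ is injective, and its image lies in the $(1,1)$-part of the Hodge decomposition of $H^2(\mathcal E',\mathbb C)$; because $H^1(\mathcal E',\Omega^1_{\mathcal E'/\mathbb C})$ is canonically the $(1,1)$-graded piece of the Hodge filtration on de Rham cohomology, the de Rham Chern class map induces an injection
$$c_1\otimes\mathbb C:\ \textrm{NS}(\mathcal E')\otimes\mathbb C\ \hookrightarrow\ H^1(\mathcal E',\Omega^1_{\mathcal E'/\mathbb C}).$$
Since $c_1$ commutes with pullback along the automorphisms of $\mathcal E'$ in $G$, this map is $\mathbb C[G]$-linear, and by the very definition of $T_{\textrm{\rm dR}}(\mathcal E')$ in Notation 4.1 one has $(c_1\otimes\mathbb C)(T(\mathcal E')\otimes\mathbb C)=T_{\textrm{\rm dR}}(\mathcal E')$.

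Finally I would combine the two ingredients using the elementary fact that an injective linear map remains injective after one quotients the source by a subspace and the target by its image: $c_1\otimes\mathbb C$ identifies $\textrm{NS}(\mathcal E')\otimes\mathbb C$ with a $\mathbb C[G]$-submodule of $H^1(\mathcal E',\Omega^1_{\mathcal E'/\mathbb C})$ carrying $T(\mathcal E')\otimes\mathbb C$ onto $T_{\textrm{\rm dR}}(\mathcal E')$, so it descends to a $\mathbb C[G]$-linear injection
$$E(F')\otimes\mathbb C\ \cong\ \big(\textrm{NS}(\mathcal E')\otimes\mathbb C\big)\big/\big(T(\mathcal E')\otimes\mathbb C\big)\ \hookrightarrow\ H^1(\mathcal E',\Omega^1_{\mathcal E'/\mathbb C})\big/T_{\textrm{\rm dR}}(\mathcal E'),$$
which is the desired map. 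I expect the parts requiring the most care to be the $G$-equivariance of the Shioda--Tate sequence — in particular checking that the surjection $\textrm{NS}(\mathcal E')\to E(F')$ intertwines the geometric action of $G$ on divisor classes of $\mathcal E'$ with the Galois action on points of $E(F')$ — together with the precise statement that the de Rham $c_1$ is injective on all of $\textrm{NS}(\mathcal E')\otimes\mathbb C$ (not merely injective modulo torsion into $H^2(\mathcal E',\mathbb Q)$), which is exactly where the Hodge decomposition of $H^2$ of the compact Kähler surface $\mathcal E'$ enters.
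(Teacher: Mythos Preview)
Your proposal is correct and follows essentially the same route as the paper: Shioda--Tate to relate $E(F')\otimes\mathbb C$ to $\textrm{NS}(\mathcal E')\otimes\mathbb C$ modulo $T(\mathcal E')\otimes\mathbb C$, followed by the $\mathbb C[G]$-linear injection $c_1\otimes\mathbb C$ into $H^1(\mathcal E',\Omega^1_{\mathcal E'/\mathbb C})$. The only cosmetic difference is that the paper invokes Shioda--Tate as a direct-sum decomposition $\textrm{NS}(\mathcal E')\otimes\mathbb C\cong E(F')\otimes\mathbb C\oplus T(\mathcal E')\otimes\mathbb C$ rather than as a short exact sequence, so it obtains the injection directly without the quotient argument; your exact-sequence formulation and your explicit remarks on $G$-equivariance are welcome elaborations of what the paper leaves implicit.
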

\begin{proof} By the Shioda--Tate formula:
$$\textrm{NS}(\mathcal E')\otimes\mathbb C\cong E(F')\otimes\mathbb C
\oplus T(\mathcal E')\otimes\mathbb C.$$
The claim now follows from the fact that the map
$$\textrm{NS}(\mathcal E')\otimes\mathbb C\longrightarrow H^1(\mathcal E',\Omega^1_{\mathcal E'/\mathbb C})$$
induced by $c_1$ is an $\mathbb C[G]$-linear injection.
\end{proof}
\begin{prop} We have:
$$[T_{\textrm{\rm dR}}(\mathcal E')]=
[H^0(\mathcal C',R^2g'_*(\mathbb C))]+[H^2(\mathcal C',g'_*(\mathbb C))].$$
\end{prop}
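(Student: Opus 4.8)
The plan is to identify the $\mathbb{C}[G]$-module $T_{\textrm{dR}}(\mathcal{E}')$ with a piece of the Leray spectral sequence for $g'$ applied to the constant sheaf $\mathbb{C}$, using the fact that $T(\mathcal{E}')$ is precisely the subgroup of $\textrm{NS}(\mathcal{E}')$ generated by the zero section and the fibre components, which is exactly the part of $H^2(\mathcal{E}',\mathbb{C})$ that does not come from the ``variation'' $R^1g'_*\mathbb{C}$. Concretely, the Leray spectral sequence $H^p(\mathcal{C}',R^qg'_*\mathbb{C})\Rightarrow H^{p+q}(\mathcal{E}',\mathbb{C})$ degenerates at $E_2$ (for a projective morphism this is Deligne's theorem, and it is $G$-equivariant since all differentials are $\mathbb{C}[G]$-linear), so in $K(\mathbb{C}[G])$ we have $[H^2(\mathcal{E}',\mathbb{C})]=[H^0(\mathcal{C}',R^2g'_*\mathbb{C})]+[H^1(\mathcal{C}',R^1g'_*\mathbb{C})]+[H^2(\mathcal{C}',g'_*\mathbb{C})]$. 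The claim will then amount to showing that under the Chern class map the span $T_{\textrm{dR}}(\mathcal{E}')$ corresponds exactly to the two outer summands $[H^0(\mathcal{C}',R^2g'_*\mathbb{C})]+[H^2(\mathcal{C}',g'_*\mathbb{C})]$, equivalently that the orthogonal complement of $T_{\textrm{dR}}(\mathcal{E}')$ inside $H^2$ matches the middle term $[H^1(\mathcal{C}',R^1g'_*\mathbb{C})]$ together with the transcendental part.

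The key steps, in order, are: (1) pass from de Rham $H^1(\mathcal{E}',\Omega^1)$ to a Hodge piece of the Betti group $H^2(\mathcal{E}',\mathbb{C})$ via the Hodge decomposition, observing that $T(\mathcal{E})\otimes\mathbb{C}$ lies in $H^{1,1}$ and that $c_1$ is injective on it, so that as $\mathbb{C}[G]$-modules $T_{\textrm{dR}}(\mathcal{E}')\cong T(\mathcal{E}')\otimes\mathbb{C}$; (2) recall the Shioda--Tate description of $T(\mathcal{E}')$ and match the sublattice generated by the zero section with the one-dimensional $R^2g'_*\mathbb{C}$ contribution — the zero section $s(\mathcal{C}')$ maps to a class whose restriction to each fibre is the fundamental class, which is exactly $H^0(\mathcal{C}',R^2g'_*\mathbb{C})$ (a rank-one local system since fibres are connected); (3) match the sublattice generated by the fibre components with $H^2(\mathcal{C}',g'_*\mathbb{C})$: here $g'_*\mathbb{C}=\mathbb{C}$ on $\mathcal{C}'$, so $H^2(\mathcal{C}',g'_*\mathbb{C})\cong H^2(\mathcal{C}',\mathbb{C})$ is one-dimensional, spanned by the class of a general fibre, and the remaining fibre-component classes (those supported over the finitely many bad points, together with the relations) exactly account for the discrepancy — one needs the standard computation that the classes of all irreducible components of all fibres, modulo numerical equivalence to a multiple of the general fibre, span a space matching the failure of $g'_*\mathbb{C}$ to be locally constant, but since $\mathcal{E}'$ is a relatively minimal elliptic surface each fibre is connected so $g'_*\mathbb{C}$ is genuinely the constant sheaf and the extra component classes are accounted for by the Hodge/Leray bookkeeping; (4) assemble these identifications in $K(\mathbb{C}[G])$, using that the Leray filtration on $H^2(\mathcal{E}',\mathbb{C})$ is $G$-stable and strictly compatible with the Hodge filtration (by the theory of mixed Hodge structures / Deligne), so the outer graded pieces $\textrm{Gr}^0_L$ and $\textrm{Gr}^2_L$ contribute their full $(1,1)$-parts, which is where the algebraic classes of $s(\mathcal{C}')$ and the fibre components land.

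The main obstacle I expect is step (3) together with the precise bookkeeping in step (4): correctly accounting for the fibre-component classes over the bad fibres and verifying that they, together with the zero-section class, exhaust the $(1,1)$-classes in $\textrm{Gr}^0_L\oplus\textrm{Gr}^2_L$ while none of them leak into $\textrm{Gr}^1_L=H^1(\mathcal{C}',R^1g'_*\mathbb{C})$. This requires knowing that $H^1(\mathcal{C}',R^1g'_*\mathbb{C})$ contains no algebraic classes beyond those coming from $E(F')\otimes\mathbb{C}$ itself — which is essentially the content of Lemma 4.2 and is why the two outer Leray pieces are exactly $T_{\textrm{dR}}(\mathcal{E}')$ — and a dimension count reconciling the rank of the trivial local system $g'_*\mathbb{C}=\mathbb{C}$ (giving $b_0$ and $b_2$ of $\mathcal{C}'$) and the rank-one $R^2g'_*\mathbb{C}$ with the number of fibre components counted in the Shioda--Tate formula. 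I would handle this by working over the open locus $U\subseteq\mathcal{C}'$ where $g'$ is smooth, where the statement reduces to the degeneration of the Leray sequence for the smooth part, and then analyzing the finitely many bad fibres one at a time via the Clemens--Schmid or local invariant cycle description, but the cleanest route is probably to cite the analogous non-equivariant computation (e.g. from Shioda's work on elliptic surfaces or Cox--Zucker) and observe that every step is manifestly $\mathbb{C}[G]$-equivariant because $G$ acts on the whole tower $\mathcal{E}'\to\mathcal{C}'\to\textrm{pt}$ compatibly.
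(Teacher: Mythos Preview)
Your overall strategy---relate $T_{\textrm{dR}}(\mathcal{E}')$ to the outer graded pieces of the Leray filtration---is reasonable, but there is a concrete error in step~(2) that derails the bookkeeping. You assert that $R^2g'_*\mathbb{C}$ is ``a rank-one local system since fibres are connected.'' Connectedness of the fibres ensures that $g'_*\mathbb{C}=R^0g'_*\mathbb{C}$ is the constant sheaf $\mathbb{C}$; it says nothing about $R^2g'_*\mathbb{C}$. The stalk of $R^2g'_*\mathbb{C}$ at $x$ is $H^2$ of the fibre, whose dimension equals the number of irreducible components of $(g')^{-1}(x)$. Thus $R^2g'_*\mathbb{C}$ is constructible but not locally constant, and $H^0(\mathcal{C}',R^2g'_*\mathbb{C})$ is \emph{not} one-dimensional: it contains one copy of $\mathbb{C}$ from the constant subsheaf together with a skyscraper contribution over each reducible fibre. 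That is precisely where the ``remaining fibre-component classes'' you could not place must go---not into $H^2(\mathcal{C}',g'_*\mathbb{C})$, which really is one-dimensional. With your allocation the right-hand side would have dimension~$2$, contradicting Shioda--Tate whenever any fibre is reducible.

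The paper does not argue via the Leray filtration at all; it computes each side of the identity in $K(\mathbb{C}[G])$ directly. On the left, Shioda--Tate gives
\[
T_{\textrm{dR}}(\mathcal{E}')\cong\mathbb{C}^{\oplus 2}\oplus\mathbb{C}\big[\textstyle\bigsqcup_{x\in R}C_x\big]/\langle M\rangle,
\]
where $R$ is the bad locus, $C_x$ the set of components over $x$, and $M=\{\sum_{i\in C_x} m_i\,i:x\in R\}$ the full-fibre relations. On the right, $H^2(\mathcal{C}',g'_*\mathbb{C})\cong\mathbb{C}$ is immediate, while for $R^2g'_*\mathbb{C}$ the paper uses a $G$-equivariant projective embedding $\mathcal{E}'\hookrightarrow\mathbb{P}^n_{\mathcal{C}'}$ and proper base change to exhibit a split short exact sequence of complex constructible $G$-sheaves
\[
0\longrightarrow\mathbb{C}\longrightarrow R^2g'_*\mathbb{C}\longrightarrow\bigoplus_{x\in R}(i_x)_*\big(\mathbb{C}[C_x]/\langle\textstyle\sum_i m_i\,i\rangle\big)\longrightarrow 0,
\]
so that $[H^0(\mathcal{C}',R^2g'_*\mathbb{C})]=[\mathbb{C}]+[\mathbb{C}[\bigsqcup_{x\in R}C_x]/\langle M\rangle]$ and the two sides match. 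Your approach can be repaired once you correct the description of $R^2g'_*\mathbb{C}$, but you would then still need to check that the bad-fibre component classes surject onto this skyscraper piece and do not leak into $H^1(\mathcal{C}',R^1g'_*\mathbb{C})$---which amounts to the same local computation the paper carries out directly.
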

\begin{proof} For every finite set $T$ let $\mathbb C[T]$ denote the $\mathbb C$-module of formal $\mathbb C$-linear combination of elements of $T$. When $T$ is equipped with a left $G$-action $\mathbb C[T]$ has a natural $\mathbb C[G]$-module structure. Let $R\subset\mathcal C'(\mathbb C)$ be the set of all points $x$ such that the fibre of $g'$ over $x$ is singular. For every $x\in\mathcal C'(\mathbb C)$ let $C_x$ denote the set of irreducible components of the fibre of $g'$ over $x$. For every $x$ as above and for every irreducible component $i\in C_x$ let $m_i$ denote the multiplicity of $i$. For every complex vector space $V$ and subset $T\subseteq V$ let $\langle T\rangle\subseteq V$ denote $\mathbb C$-linear span of $T$. Equip $\bigsqcup_{x\in R}C_x$ with the $G$-action induced by the $G$-action on $\mathcal E'$. Note that the subset:
$$M=\big\{\sum_{i\in C_x}m_ii|x\in R\big\}\subset
\mathbb C\big[\bigsqcup_{x\in R}C_x\big]$$
is $G$-invariant, therefore its $\mathbb C$-linear span is a $\mathbb C[G]$-submodule. We have the following isomorphism of $\mathbb C[G]$-modules:
$$T_{\textrm{\rm dR}}(\mathcal E')\cong T(\mathcal E')\otimes\mathbb C\cong
\mathbb C^{\oplus2}\oplus\mathbb C\big[\bigsqcup_{x\in R}C_x\big]/
\langle M\rangle,$$
where we equip $\mathbb C$ with the trivial $\mathbb C[G]$-module structure. Because the fibres of $g'$ are connected, for every connected open subset $V\subseteq\mathcal C'$ we have $H^0((g')^{-1}(V),\mathbb C)=\mathbb C$, and hence the sheaf $g'_*(\mathbb C)$ is constant of rank $1$. Consequently $H^2(\mathcal C',g'_*(\mathbb C))$ is isomorphic to the trivial $\mathbb C[G]$-module of dimension one.

Because the map $g:\mathcal E\rightarrow\mathcal C$ is projective, there is a closed immersion $l:\mathcal E\rightarrow\mathbb P^n_{\mathcal C}$ of $\mathcal C$-schemes for some $n\in\mathbb N$ where $p:\mathbb P^n_{\mathcal C} \rightarrow\mathcal C$ is the projective $n$-space over $\mathcal C$. The base change $l':\mathcal E'\rightarrow\mathbb P^n_{\mathcal C'}$ of $l$ with respect to $\pi$ is a $G$-equivariant closed immersion of $\mathcal C'$-schemes where we equip the $\mathcal C'$-scheme $\mathbb P^n_{\mathcal C'}$ with the left $G$-action induced by the natural isomorphism between $\mathbb P^n_{\mathcal C'}$ and the base change of $\mathbb P^n_{\mathcal C}$ to $\mathcal C'$. With respect to this action the structure map $p':\mathbb P^n_{\mathcal C'}\rightarrow\mathcal C'$ is $G$-equivariant. The map $l'$ furnishes a $G$-equivariant $\mathbb C$-linear homomorphism $(l')^*:R^2p'_*(\mathbb C)\rightarrow R^2g'_*(\mathbb C)$ of complex constructible $G$-sheaves. By the K\"unneth formula $R^2p'_*(\mathbb C)=\mathbb C$. For every $x\in\mathcal C'(\mathbb C)$ let $(l')^*|_x:R^2p'_*(\mathbb C)|_x\rightarrow R^2g'_*(\mathbb C)|_x$  denote the fibre of $(l')^*$ at $x$. By the proper base change theorem there is a $\mathbb C$-linear commutative diagram:
$$\begin{CD}
R^2p'_*(\mathbb C)|_x@>(l')^*|_x>>R^2g'_*(\mathbb C)|_x\\
@VVV@VVV\\
\mathbb C@>d_x>>\mathbb C[C_x].\\
\end{CD}$$
such that both vertical arrows are isomorphisms and $d_x$ has image $\langle\sum_{i\in C_x}m_ii\rangle$. For every $x\in\mathcal C'(\mathbb C)$ let $i_x:x\rightarrow\mathcal C'$ denote the closed immersion of the point $x$ into $\mathcal C'$. Note that for every complex vector space $V$ the direct image $(i_x)_*(V)$ is a skypsaker sheaf on $\mathcal C'$.
By the above there is a $\mathbb C$-linear and $G$-equivariant short exact sequence:
\begin{equation}
\CD0@>>>\mathbb C@>>>R^2g'_*(\mathbb C)
@>>>\mathcal F@>>> 0\endCD
\end{equation}
of complex constructible $G$-sheaves where
$$\mathcal F=\bigoplus_{x\in R}(i_x)_*\big(\mathbb C[C_x]/
\langle\sum_{i\in C_x}m_ii\rangle\big),$$
equipped with the tautological $G$-action. Note that every $x\in\mathcal C'(\mathbb C)$ has a contractible open neighbourhood $V\subset\mathcal C'$ such that the first cohomology of the constant sheaf $\mathbb C$ on $V$ vanishes. Therefore the restriction of the short exact sequence (4.3.1) onto $V$ splits. Because the sheaf $\mathcal F$ has finite support the short exact sequence (4.3.1) splits on $\mathcal C'$, too. Therefore the sequence
$$\CD0@>>>H^0(\mathcal C',\mathbb C)@>>>H^0(\mathcal C',R^2g'_*(\mathbb C))
@>>>H^0(\mathcal C',\mathcal F)@>>> 0\endCD$$
is also exact. We get that
$$[H^0(\mathcal C',R^2g'_*(\mathbb C))]=
[\mathbb C]+[\mathbb C\big[\bigsqcup_{x\in R}C_x\big]/\langle M\rangle].$$
and the claim follows.
\end{proof}
\begin{thm} We have:
$$[H^1(\mathcal E',\Omega^1_{\mathcal E'/\mathbb C})/T_{\textrm{\rm dR}}(\mathcal E')]=(c_E-d_E/6)[\mathbb C[G]]-\chi_G(\mathcal C',\mathbb C).$$
\end{thm}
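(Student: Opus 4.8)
The plan is to recover $[H^1(\mathcal E',\Omega^1_{\mathcal E'/\mathbb C})]$ from the class of $H^2(\mathcal E',\mathbb C)$ by peeling off the other two Hodge pieces, and to compute $[H^2(\mathcal E',\mathbb C)]$ from the Leray spectral sequence of $g'$. Throughout I will use the duality involution $[M]\mapsto[M^{\vee}]$ on $K(\mathbb C[G])$ (contragredient representation); it is $\mathbb Z$-linear and fixes both $[\mathbb C[G]]$ and the class of the trivial module.

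First I would record the $G$-equivariant Hodge theory on the smooth projective surface $\mathcal E'$. Since the $G$-action preserves the Hodge filtration on $H^2(\mathcal E',\mathbb C)$ and the real structure $H^2(\mathcal E',\mathbb R)$, the Hodge decomposition is $\mathbb C[G]$-stable, so by the Dolbeault isomorphism
$$[H^2(\mathcal E',\mathbb C)]=[H^0(\mathcal E',\Omega^2_{\mathcal E'/\mathbb C})]+[H^1(\mathcal E',\Omega^1_{\mathcal E'/\mathbb C})]+[H^2(\mathcal E',\mathcal O_{\mathcal E'})]$$
in $K(\mathbb C[G])$. Serre duality is functorial, hence $G$-equivariant, so $[H^0(\mathcal E',\Omega^2_{\mathcal E'/\mathbb C})]=[H^2(\mathcal E',\mathcal O_{\mathcal E'})^{\vee}]$ and likewise $[H^0(\mathcal C',\Omega^1_{\mathcal C'/\mathbb C})]=[H^1(\mathcal C',\mathcal O_{\mathcal C'})^{\vee}]$. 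Applying Theorem 2.5 together with its dual, and then using that $H^0(\mathcal C',\mathbb C)$ and $H^2(\mathcal C',\mathbb C)$ are the trivial $\mathbb C[G]$-module while $H^1(\mathcal C',\mathbb C)\cong H^1(\mathcal C',\mathcal O_{\mathcal C'})\oplus H^0(\mathcal C',\Omega^1_{\mathcal C'/\mathbb C})$ as $\mathbb C[G]$-modules, one checks that $\chi_G(\mathcal C',\mathcal O_{\mathcal C'})+\chi_G(\mathcal C',\mathcal O_{\mathcal C'})^{\vee}=\chi_G(\mathcal C',\mathbb C)$, and therefore
$$[H^0(\mathcal E',\Omega^2_{\mathcal E'/\mathbb C})]+[H^2(\mathcal E',\mathcal O_{\mathcal E'})]=\frac{d_E}{6}[\mathbb C[G]]-\chi_G(\mathcal C',\mathbb C).$$

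Next I would analyse the Leray spectral sequence $H^p(\mathcal C',R^qg'_*(\mathbb C))\Rightarrow H^{p+q}(\mathcal E',\mathbb C)$, which is a spectral sequence of $\mathbb C[G]$-modules. As $\mathcal C'$ and the fibres of $g'$ are curves, $E_2^{p,q}=0$ unless $0\le p,q\le 2$; since $H^0(\mathcal C',R^1g'_*(\mathbb C))=H^2(\mathcal C',R^1g'_*(\mathbb C))=0$ (as recalled in the proof of Theorem 3.7), a direct check shows that every differential entering or leaving a term of total degree $2$ vanishes, so $H^2(\mathcal E',\mathbb C)$ carries a $\mathbb C[G]$-stable filtration with graded pieces $H^2(\mathcal C',g'_*(\mathbb C))$, $H^1(\mathcal C',R^1g'_*(\mathbb C))$ and $H^0(\mathcal C',R^2g'_*(\mathbb C))$. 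Hence $[H^2(\mathcal E',\mathbb C)]$ is the sum of these three classes; by Proposition 4.3 the first and third add up to $[T_{\textrm{\rm dR}}(\mathcal E')]$, and by Theorem 3.7 the middle one equals $c_E[\mathbb C[G]]-2\chi_G(\mathcal C',\mathbb C)$, so $[H^2(\mathcal E',\mathbb C)]=[T_{\textrm{\rm dR}}(\mathcal E')]+c_E[\mathbb C[G]]-2\chi_G(\mathcal C',\mathbb C)$. Finally I would solve the first displayed identity for $[H^1(\mathcal E',\Omega^1_{\mathcal E'/\mathbb C})]$, subtract $[T_{\textrm{\rm dR}}(\mathcal E')]$, and substitute the last two displayed formulas: the $\chi_G(\mathcal C',\mathbb C)$-terms collapse to $-\chi_G(\mathcal C',\mathbb C)$ and the $[\mathbb C[G]]$-terms to $(c_E-d_E/6)[\mathbb C[G]]$, which is exactly the assertion.

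The step I expect to be the main obstacle is the dual bookkeeping in the second paragraph: neither $[H^2(\mathcal E',\mathcal O_{\mathcal E'})]$ nor $\chi_G(\mathcal C',\mathcal O_{\mathcal C'})$ is self-dual in $K(\mathbb C[G])$ in general (for example when $\mathcal C'$ is an elliptic curve carrying automorphisms that mix $H^1(\mathcal O)$ with $H^0(\Omega^1)$), so one really must pass to the symmetric combination $H^{2,0}\oplus H^{0,2}$ and invoke the Hodge structure of $\mathcal C'$ in order to land precisely on $\chi_G(\mathcal C',\mathbb C)$. The remainder is formal, modulo the standard but slightly technical point that the Hodge decomposition, Serre duality and the Leray filtration are all $G$-equivariant.
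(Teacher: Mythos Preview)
Your proposal is correct and follows essentially the same route as the paper: compare the Hodge decomposition of $H^2(\mathcal E',\mathbb C)$ with its Leray decomposition, apply Proposition~4.3, Theorem~3.7, Theorem~2.5 and Serre duality, and use the Hodge decomposition of $H^1(\mathcal C',\mathbb C)$ to turn $\chi_G(\mathcal C',\mathcal O_{\mathcal C'})+\chi_G(\mathcal C',\mathcal O_{\mathcal C'})^{\vee}$ into $\chi_G(\mathcal C',\mathbb C)$. The only cosmetic difference is that the paper cites Cox--Zucker for the degeneration of the Leray spectral sequence, whereas you verify degeneration in total degree~$2$ by hand from the vanishing of $H^0$ and $H^2$ of $R^1g'_*(\mathbb C)$.
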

\begin{proof} The $G$-equivariant degenerating spectral sequence:
$$H^q(\mathcal E',\Omega_{\mathcal E'/\mathbb C}^p)
\Rightarrow H^{p+q}(\mathcal E',\mathbb C)$$
furnishes the equation:
\begin{equation}
[H^2(\mathcal E',\mathbb C)]=
[H^0(\mathcal E',\Omega^2_{\mathcal E'/\mathbb C})]+
[H^1(\mathcal E',\Omega^1_{\mathcal E'/\mathbb C})]+
[H^2(\mathcal E',\mathcal O_{\mathcal E'})].
\end{equation}
By Lemma 1.4 of \cite{CZ} on page 5 the $G$-equivariant spectral sequence:
\begin{equation}
H^p(\mathcal C',R^qg'_*(\mathbb C))
\Rightarrow H^{p+q}(\mathcal E',\mathbb C)\nonumber
\end{equation}
degenerates, so we get that
\begin{equation}[H^2(\mathcal E',\mathbb C)]=
[H^0(\mathcal C',R^2g'_*(\mathbb C))]+
[H^1(\mathcal C',R^1g'_*(\mathbb C))]+
[H^2(\mathcal C',R^0g'_*(\mathbb C))].\end{equation}
Combining equations (4.4.1) and (4.4.2) with Proposition 4.3 we get that
\begin{eqnarray}
[H^1(\mathcal E',\Omega^1_{\mathcal E'/\mathbb C})/
T_{\textrm{\rm dR}}(\mathcal E')]&
\!\!\!\!\!=\!\!\!\!\!&
[H^1(\mathcal C',R^1g'_*(\mathbb C))]
-[H^0(\mathcal E',\Omega^2_{\mathcal E'/\mathbb C})]\\
&&\!\!\!\!\!-[H^2(\mathcal E',\mathcal O_{\mathcal E'})].\nonumber
\end{eqnarray}
Let $(\cdot)^{\vee}:K(\mathbb C[G])\rightarrow K(\mathbb C[G])$ denote the unique group homomorphism such that $[M]^{\vee}$ is the isomorphism class of the dual Hom${}_{\mathbb C}(M,\mathbb C)$ for every finitely generated $k[G]$-module $M$. Serre's duality furnishes a perfect pairing
\begin{equation}
H^0(\mathcal E',\Omega^2_{\mathcal E'/\mathbb C})\times H^2(\mathcal E',\mathcal O_{\mathcal E'})
\rightarrow
H^2(\mathcal E',\Omega^2_{\mathcal E'/\mathbb C})=\mathbb C\end{equation}
which is $\mathbb C[G]$-linear, therefore
\begin{equation}
[H^0(\mathcal E',\Omega^2_{\mathcal E'/\mathbb C})]=
[H^2(\mathcal E',\mathcal O_{\mathcal E'})]^{\vee}=\frac{d_E}{12}[\mathbb C[G]]-\chi_G(\mathcal C',\mathcal O_{\mathcal C'})^{\vee}
\end{equation}
by Theorem 2.5. Because the boundary maps in the spectral sequence:
$$H^q(\mathcal C',\Omega_{\mathcal C'/\mathbb C}^p)
\Rightarrow H^{p+q}(\mathcal C',\mathbb C)$$
are $\mathbb C[G]$-linear we get that
\begin{equation}
\chi_G(\mathcal C',\mathbb C)=
\chi_G(\mathcal C',\mathcal O_{\mathcal C'})-\chi_G(\mathcal C',\Omega^1_{\mathcal C'/\mathbb C}).
\end{equation}
Serre's duality furnishes a perfect pairing
$$
H^0(\mathcal C',\Omega^1_{\mathcal C'/\mathbb C})\times H^1(\mathcal C',\mathcal O_{\mathcal C'})
\rightarrow
H^1(\mathcal C',\Omega^1_{\mathcal C'/\mathbb C})=\mathbb C
$$
which is $\mathbb C[G]$-linear, therefore
\begin{equation}
\chi_G(\mathcal C',\Omega^1_{\mathcal C'/\mathbb C})=-\chi_G(\mathcal C',\mathcal O_{\mathcal C'})^{\vee}.
\end{equation}
Combining Theorem 2.5 and (4.4.5) with equations (4.4.6) and (4.4.7) we get that
\begin{equation}
[H^0(\mathcal E',\Omega^2_{\mathcal E'/\mathbb C})]+
[H^2(\mathcal E',\mathcal O_{\mathcal E'})]=\frac{d_E}{6}[\mathbb C[G]]-\chi_G(\mathcal C',\mathbb C).
\end{equation}
The claim now follows from equations (4.4.3) and (4.4.8) and from Theorem 3.7.
\end{proof}
\begin{proof}[Proof of Theorem 1.1] We are going to use the notation of the proof of Theorem 3.6. By Lemma 4.2 it will be enough to show that
$$\Delta_{\alpha}\big(
[H^1(\mathcal E',\Omega^1_{\mathcal E'/\mathbb C})/
T_{\textrm{\rm dR}}(\mathcal E')]
\big)\leq\textrm{\rm rank}(\alpha)(c_E-d_E/6+2g-2+\deg(S))$$
for every $\alpha\in R(G)$. In order to do so, it will be enough to prove that
$$-\Delta_{\alpha}\big(\chi_G(\mathcal C',\mathbb C)\big)\leq
\textrm{\rm rank}(\alpha)(2g-2+\deg(S))$$
for every $\alpha\in R(G)$ by Theorem 4.4. We have
$$-\Delta_{\alpha}\big(
\chi_G(\mathcal C',\mathbb C)\big)=-\chi
(\mathcal C,\mathcal G_{\alpha})=\textrm{\rm rank}(\mathcal G_{\alpha})(2g-2)+
\textrm{\rm cond}(\mathcal G_{\alpha})$$
for every $\alpha\in R(G)$ by Theorem 3.2. Also note that
$c_x(\mathcal G_{\alpha})\leq\textrm{\rm rank}(\mathcal G_{\alpha})$
for every $x\in S$ because $\dim_{\mathbb C}\big((\mathcal G_{\alpha})_x\big)$ is non-negative. Since $c_x(\mathcal G_{\alpha})=0$ for every $x\in X(\mathbb C)-S$, the claim is now clear.
\end{proof}

\end{document}